%
\documentclass[runningheads]{llncs}

\usepackage{graphicx}
%

\usepackage{xcolor}
\usepackage{hyperref}
\usepackage{xspace}
\usepackage{amssymb,amsmath,amsfonts,graphicx,psfig}

\def\aiOrcid{\hspace*{.4mm}\includegraphics[scale=.5]{orcid_16x16.png}}
\definecolor{orcidlogocol}{HTML}{A6CE39}
\def\orcidID#1{\href{https://orcid.org/#1}{\textcolor{orcidlogocol}{\aiOrcid} }}

\definecolor{darkblue}{rgb}{0.0, 0.0, 0}
\newcommand{\blue}[1]{{#1}\xspace}
\newcommand{\darkblue}[1]{{\color{darkblue} #1}\xspace}

\newcommand{\opt}{\ensuremath{\operatorname{\textsc{Opt}}}\xspace}

\newcommand{\Reg}{Regular}
\newcommand{\reg}{regular}


\begin{document}
\title{Approximation Algorithms for Graph Burning}
%
%
\author{Anthony Bonato\inst{1}
\and
Shahin Kamali\inst{2}
}
\authorrunning{A. Bonato and S. Kamali}
%
\institute{Ryerson University, Toronto, ON, Canada \\ \email{\href{mailto:abonato@ryerson.ca}{abonato@ryerson.ca}}
\and University of Manitoba, Winnipeg, MB, Canada  \\
\email{ \href{mailto:shahin.kamali@umanitoba.ca}{shahin.kamali@umanitoba.ca} }}
\maketitle              
\begin{abstract}
Numerous approaches study the vulnerability of networks against social contagion. Graph burning studies how fast a contagion, modeled as a set of fires, spreads in a graph. The burning process takes place in synchronous, discrete rounds. In each round, a fire breaks out at a vertex, and the fire spreads to all vertices that are adjacent to a burning vertex. The selection of vertices where fires start defines a schedule that indicates the number of rounds required to burn all vertices. Given a graph, the objective of an algorithm is to find a schedule that minimizes the number of rounds to burn graph. Finding the optimal schedule is known to be NP-hard, and the problem remains NP-hard when the graph is a tree or a set of disjoint paths. The only known algorithm is an approximation algorithm for disjoint paths, which has an approximation ratio of 1.5.
\vspace{0.1in}

We present approximation algorithms for graph burning. For general graphs, we introduce an algorithm with an approximation ratio of 3. When the graph is a tree, we present another algorithm with approximation ratio 2. Moreover, we consider a setting where the graph is a forest of disjoint paths. In this setting, when the number of paths is constant, we provide an optimal algorithm which runs in polynomial time. When the number of paths is more than a constant, we provide two approximation schemes: first, under a regularity condition where paths have asymptotically equal lengths, we show the problem admits an approximation scheme which is fully polynomial. Second, for a general setting where the regularity condition does not necessarily hold, we provide another approximation scheme which runs in time polynomial in the size of the graph.

\keywords{Approximation Algorithms \and Graph Algorithms \and
Graph Burning Problem \and
Information Dissemination \and
Social Contagion}
\end{abstract}

\section{Introduction}
%
Numerous efforts were initiated to characterize and analyze social contagion or social influence in networks; see, for example, \cite{BondFJKMSF12,Fajardo2013,Kramer12,Kramer14}.
These studies investigate the vulnerabilities and strengths of these networks against the spread of an emotional state or other data, such as a meme or gossip. For example, there are studies that suggest emotional states can be transferred to others via emotional contagion on Facebook; such emotional contagion is known to occur without direct interaction between people and in the complete absence of nonverbal cues \cite{Kramer14}.

The \emph{burning number} \cite{6BonatoJR14,5BonatoJR16} measures how prone a network is to fast social contagion. In the burning protocol, like many other network protocols, data is communicated between nodes in discrete rounds. The input is an undirected, unweighted, finite simple graph. We say a node is burning if it has received data. Initially, no vertex is burning. In each round, a burning vertex sends data to all its neighbors, and all neighbors will be on fire at the end of the round; this is consistent with the fact that a user in the network can expose all its neighbours to a posted piece of data. In addition, in each given round, a new fire starts at a non-burning vertex called an \emph{activator}; this can be interpreted as a way to target additional users that initiate the contagion. Note that the burning protocol does not provide a specified algorithm of how the fire spreads. However, the algorithm can choose where to initiate the fire. The decisions of the algorithm for the location of activators define a \emph{schedule} that can be described by a \emph{burning sequence}: the $i$th member of the burning sequence indicates the vertex at which a fire is started in round $i$.
We say the graph is \emph{burned} when all vertices are on fire; that is, all members of the network have received the data.
Figure~\ref{fig:burn-process} provides an illustration of the burning process.

To understand how prone a graph is to the spread of data, we are interested in schedules that minimize the number of rounds required to burn the whole graph. The {burning number} of a given graph is the minimum such number; hence, an optimal algorithm burns the graph in a number of rounds that is equal to the burning number. 
Unfortunately, finding optimal solutions is NP-hard even for elementary graph families \cite{4BessyBJRR17}. The focus of this paper is to provide approximation algorithms for burning graphs.

\begin{figure}
\centering
\includegraphics[scale=.6,trim={5cm 20cm 5cm 14cm}, clip, scale=.66]{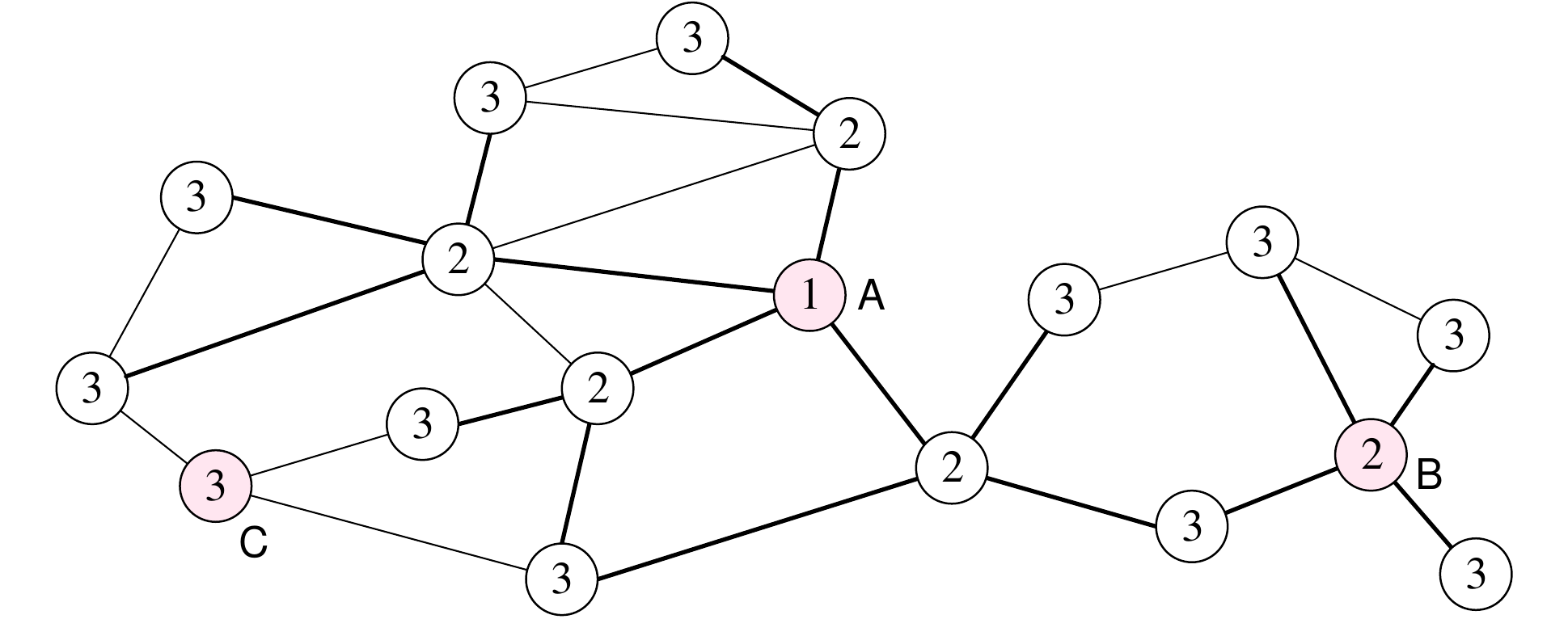}
\caption{Burning a graph in three rounds using a schedule defined by burning sequence $\langle A,B,C \rangle$. The number \blue{on each vertex} indicates the rounds at which \blue{the vertex becomes a burning vertex}. At round 1, a fire starts at $A$. At round 2, another fire starts at $B$ while the fire at $A$ spreads to all neighbors of $A$. At round 3, the fire spreads to all vertices except for $C$, where a new fire is started.}\label{fig:burn-process}
\end{figure}

\subsection*{Previous work}
Bonato et al.\ \cite{6BonatoJR14,5BonatoJR16} first introduced the burning process as a way to model spread of contagion in a social network; they characterized the burning number for some graph classes and proved some properties for the burning number. The results of \cite{1MitschePR17,product18} extended these results for additional graph families and also studied a variant of burning number in which the burning sequence is selected according to some probabilistic rule. Bessy et al.\ \cite{3BessyBJRR18} further studied the burning number and proved that for a connected graph of size $n$ the burning number is at most $2\lceil\sqrt{n}\rceil-1$ and conjectured that this number is indeed at most $\lceil \sqrt{n} \rceil$. They proved better bounds for the burning number of trees. Land and Lu \cite{LandL16} slightly improved the upper bound to $\frac{\sqrt{6}}{2} \sqrt{n}$. Sim et al.\ \cite{Paterson} provided tight bounds for the burning number of generalized Petersen graphs.
Bonato et al.\ \cite{4BessyBJRR17} proved that it is NP-hard to find a schedule that completes burning in the minimum number of rounds (in time equal to the burning number). Interestingly, their hardness result holds for basic graph families such as acyclic graphs with maximum degree three, spider graphs, and path forests (that is, a disjoint union of paths).

There are numerous gossiping and broadcasting protocols that \blue{aim to} model the amount of time it takes to spread information throughout a given network. 
For example, in the \emph{telephone model} for gossiping, there is a distinguished originator that starts spreading the gossip. 
In a given round, each node that has received a \blue{piece of data (gossip)} can inform one of its neighbors via a phone call. A gossip schedule defines the order in which each node informs its neighbors. The goal of a schedule is to minimize the number of rounds required to inform all vertices.  This problem is known to be NP-hard \cite{GareyJ79,SlaterCH81} (in fact, APX-hard \cite{Schindelhauer00}) and there is an approximation algorithm completes within a sublogarithmic factor of optimal schedule \cite{ElkinK06} (whether a constant approximation algorithm exists is an open problem). We refer the reader to \cite{HedetniemiHL88,NikzadR14,Ravi94} for more results on telephone broadcasting. It is evident that the telephone model is not suitable for 
situations where a user can expose all its neighbors by posting a gossip and without in-person communication with them. The \emph{Radio model} is more relevant in this context, where each informed vertex broadcasts the message to all its neighbors; however, in this model, there is a pre-defined set of originators and it is often assumed that vertices have limited information about graph structures 
(see, for example, \cite{CzumajR06,GhaffariHK15,KowalskiP07,Peleg07}).

Social contagion is important from a \emph{viral marketing} perspective, based on the observation that targeting a small set of users can have a cascading word-of-mouth effect in a social network. Domingos and Richardson \cite{DomingosR01,RichardsonD02} define influence maximization problems that aim to define a set of initially activated user that can eventually influence a maximum members of the network. This problem is known to be NP-hard. Kempe et al.\ provide several approximation algorithms for several simple diffusion models \cite{KempeKT03,KempeKT15} as well as a more general decreasing cascade model, where a behaviour spreads in a cascading fashion according to a probabilistic rule \cite{KempeKT05}. These results were followed by more approximation algorithms and inapproximability results for these models (see, for example, \cite{ChenGL11,ChenCCKLRSWWY11,ChenWY09}). We refer the reader to Kleinberg \cite{Kleinberg13} for the economic aspects of cascading behaviour on social networks. Note that besides the diffusion model, the influence maximization problem is different from burning in the sense that initial informed users start spreading data at the same time (while in burning they start one at a time).

Another problem related to graph burning is the \emph{Firefighter Problem}, which also assumes discrete, synchronous rounds. Given a graph $G$, at round 1, a fire starts at a given node $r$ of $G$. In each subsequent round, a firefighter can defend one non-burning vertex while the fire spreads to all undefended neighbours of each burning vertex. Once burning or defended, a vertex remains so for all subsequent rounds. The process ends when the fire can no longer spread. The goal of an algorithm (which we identify with a firefighter) is to defend a maximum number of vertices that can be saved; that is, that are not burning at the end of the process. Despite similarities in the underlying model, the objective in the Firefighter problem is quite different from the burning problem. As expected, the Firefighter problem is NP-hard \cite{FinbowKMR07}, and it is known that no approximation algorithms can achieve a factor of $n^\alpha$ for any $\alpha<1$, assuming $P\neq NP$ \cite{AnshelevichCHS12}. The problem remains NP-hard for the trees \cite{FinbowKMR07}; however, there are constant-factor approximation algorithms for trees (see, for example, \cite{Hartnell,CaiVY08}).

\subsection*{Contributions}
The burning problem is NP-hard, which is not surprising as many related problems are NP-hard.  However, the fact that the problem remains NP-hard for elementary graph families such as path forests (that is, disjoint unions of paths) raises questions about its computational complexity. In particular, we may ask whether there is a polynomial algorithm that has a constant approximation ratio. Bonato and Lidbetter \cite{2017arXiv170709968B} answered this question for path forests in the affirmative by introducing a 3/2-approximation algorithm. The problem remained open for other graph families. This question is particularly interesting because it has different answers for similar problems (as described in the previous section): for telephone broadcasting, it remains open whether there is a constant approximation algorithm. For influence maximization, there is a constant approximation algorithm, while for the Firefighter problem, it is NP-hard to achieve a sublinear approximation ratio. 

In this paper, we show that there is indeed a simple polynomial algorithm with constant approximation ratio of at most $3$ for \emph{any} graph. Our algorithm is intuitive and runs in \blue{time} $O(m \log n)$ for a graph with $n$ vertices and $m$ edges. When the graph is a tree, we present another algorithm with improved approximation ratio of 2. 
Finally, we consider the problem when the graph is a path forest. 
In case the graph is formed by a constant number of paths, we present a dynamic programming algorithm that creates an optimal solution in polynomial time. When the number of paths is not a constant, we \blue{provide two approximation schemes. The first scheme works under a regularity condition which implies the lengths of paths are asymptotically equal. For this scheme, we}
reduce the problem to the bin covering problem to achieve a fully polynomial time approximation scheme (FPTAS) for the problem. \blue{For the general setting, when there is no assumption on the length of the paths, we use a different approach to present a polynomial time approximation scheme (PTAS) which runs in \blue{time} polynomial in the size of the graph.}
%


\section{Approximation Algorithm for Burning Graphs}
In this section, we devise an approximation algorithm with approximation factor of 3 for the burning problem. Throughout the section, we use $G=(V,E)$ to denote an input graph and \opt to denote the optimal algorithm for the problem. We use $\opt(G)$ to denote the burning number of $G$.
We begin with the following lemma.

%

\begin{lemma}\label{lem1}
For a positive integer $r$, if there are $r$ vertices 
at pairwise distance at least $2r-1$, then any burning schedule requires at least $r$ rounds to complete.
\end{lemma}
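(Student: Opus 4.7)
The plan is to argue by contradiction, comparing the number of rounds a schedule uses against the spread radius available to each activator. Suppose some schedule burns $G$ in $s$ rounds, with $s < r$. The key observation is that a fire started at round $i$ has exactly $s - i$ rounds in which to propagate before round $s$ ends, so the set of vertices it burns is contained in the closed ball of radius $s - i$ around its activator. Hence, by the end of the schedule, the entire vertex set is covered by $s$ closed balls whose radii are $s-1, s-2, \ldots, 1, 0$.

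Now I would bring in the hypothesis. Let $v_1, \ldots, v_r$ be the $r$ vertices at pairwise distance at least $2r-1$. Each $v_j$ must lie in one of the $s$ activator balls; since $s < r$, the pigeonhole principle gives two indices $j \neq j'$ with $v_j, v_{j'}$ inside the same ball, say the one of radius $s - i$ centered at some vertex $u$. Then by the triangle inequality
\[
d(v_j, v_{j'}) \leq d(v_j, u) + d(u, v_{j'}) \leq 2(s-i) \leq 2(s-1) \leq 2(r-2) = 2r - 4,
\]
which contradicts $d(v_j, v_{j'}) \geq 2r - 1$. This contradiction forces $s \geq r$.

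The only step that requires any care is the precise radius bookkeeping, namely the claim that an activator placed in round $i$ of an $s$-round schedule controls a ball of radius exactly $s - i$ at termination (which follows directly from the synchronous spread rule of the burning process). Once that is in place, the argument is essentially a one-line pigeonhole followed by a triangle inequality, so I do not anticipate any technical obstacles.
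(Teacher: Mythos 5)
Your proof is correct and takes essentially the same approach as the paper's: both rest on the fact that a fire started in round $i$ of an $s$-round schedule can only burn vertices within distance $s-i$ of its activator, so fewer than $r$ fires cannot reach $r$ vertices that are pairwise at distance at least $2r-1$. The paper organizes this as disjoint balls of radius $r-1$ around the given vertices, each of which must contain its own activator, while you pigeonhole the given vertices into the activator balls and finish with the triangle inequality --- a dual bookkeeping of the same packing argument.
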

\begin{proof}
Let $x_1, x_2, \ldots, x_r$ be $r$ vertices of pairwise distance at least $2r-1$. For each $x_i$, consider the ball of radius $r-1$ formed by vertices of distance at most $r-1$ from $x_i$. Since the distance of $x_i$ and any $x_j$ is at least $2r-1$, their balls do not intersect. 
Assume there is a schedule that completes in at most $r-1$ rounds. That schedule should have a fire started inside each ball (a fire started at a distance $r$ or more reaches $x_i$ after at least $r$ rounds). Hence, at least $r$ fires must be started, which implies the burning completes in at least $r$ rounds. This contradicts the initial assumption that the schedule completes within $r-1$ rounds.
\end{proof}

We devise a procedure Burn-Guess$(G,g)$ that receives a `guess' value $g$ for the number of rounds required to burn graph $G$. The output of Burn-Guess is one of the following.

\begin{enumerate}
\item A schedule that completes burning in at most $3g-3$ rounds.
\item `Bad-Guess' that guarantees 
any schedule requires at least $g$ rounds to complete.
\end{enumerate}

To devise an approximation algorithm, it suffices to 
find the smallest guess value $g^*$ so that Burn-Guess$(g^*)$ returns a schedule (which implies Burn-Guess$(g^*-1)$ returns Bad-Guess). In this way, the returned schedule completes in at most $3g^*-3$ rounds while \opt requires at least $g^*-1$ rounds to complete. This results in an algorithm with approximation ratio of at most 3. 

Burn-Guess processes vertices one-by-one in an arbitrary order and 
maintains a set of `centers' that is initially empty. When processing a vertex $v$, the algorithm checks the distance of $v$ to its closest center. 
If such distance is at most $2g-2$, then $v$ is marked as `non-center'; otherwise, $v$ is added to the set of centers. 
In this way, all centers are at pairwise distance of at least $2g-1$. After processing any vertex, if the number of centers becomes equal to $g$, then Burn-Guess returns Bad-Guess. When all vertices are processed, the algorithm returns a schedule defined by a burning sequence formed by an arbitrary ordering of centers. 

%
%


\begin{lemma}\label{badguess}
If Burn-Guess$(G,g)$ returns Bad-Guess, then there is no burning schedule for $G$ that completes in less than $g$ number of rounds.
\end{lemma}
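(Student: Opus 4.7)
The plan is to reduce the statement directly to Lemma~\ref{lem1}. When Burn-Guess returns Bad-Guess, it does so precisely because its accumulated set of centers has grown to size $g$. So the first step is to extract, from the behavior of Burn-Guess, a set of $g$ vertices that satisfy the pairwise-distance hypothesis of Lemma~\ref{lem1} with $r=g$; once this is in hand, the lemma immediately yields that any burning schedule needs at least $g$ rounds, which is exactly what we want to conclude.

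The key structural observation I would establish is that the centers maintained by Burn-Guess are pairwise at distance at least $2g-1$. I would prove this by induction on the order in which centers are added. When a vertex $v$ is about to become a center, the algorithm has just verified that its distance to every already-existing center exceeds $2g-2$, hence is at least $2g-1$. So the invariant that all pairs of centers are at distance at least $2g-1$ is preserved each time the set grows. When the set reaches size $g$, Burn-Guess halts and outputs Bad-Guess, and at that moment we have $g$ vertices at pairwise distance at least $2(g)-1$.

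The second step is then a direct application of Lemma~\ref{lem1} with $r := g$: the existence of $g$ vertices at pairwise distance at least $2g-1$ forces any burning schedule to take at least $g$ rounds, so in particular no schedule can complete in fewer than $g$ rounds. This is exactly the statement of Lemma~\ref{badguess}.

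I do not expect a genuine obstacle here; the content of the argument is really bookkeeping. The only thing that needs a moment of care is justifying the pairwise-distance invariant for centers added later in the process (the algorithm only ever compares a new candidate to the \emph{current} set of centers, but since no center is ever removed, the check against the current set is equivalent to a check against all centers, so the invariant is maintained throughout).
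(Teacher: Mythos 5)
Your proposal is correct and follows essentially the same route as the paper: observe that the centers accumulated by Burn-Guess are pairwise at distance at least $2g-1$ (which the algorithm enforces by construction), and then apply Lemma~\ref{lem1} with $r=g$ to conclude that any schedule needs at least $g$ rounds. The inductive justification of the distance invariant is just an explicit spelling-out of what the paper asserts directly.
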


\begin{proof}
Burn-Guess returns Bad-Guess if the number of centers becomes equal to $g$. Since 
all centers are at pairwise distance of at least $2g-1$, we have that 
there are $g$ vertices at pairwise distance of $2g-1$ or more. Applying Lemma~\ref{lem1}, we conclude that any burning schedule requires at least $g$ rounds to burn the graph.
\end{proof}

\begin{lemma}\label{schedule}
If Burn-Guess$(G,g)$ returns a burning sequence, then the burning of that sequence completes in at most $3g-3$ rounds.
\end{lemma}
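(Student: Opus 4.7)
My plan is to show that the burning sequence returned by Burn-Guess$(G,g)$ ignites every vertex within $3g-3$ rounds, by separately handling the centers (which are ignited directly by the schedule) and the non-centers (which must be reached by fire spreading from some center). Let $c_1, c_2, \ldots, c_k$ denote the centers in the order the algorithm added them; since Burn-Guess did not return Bad-Guess, we have $k \leq g-1$. The burning sequence ignites $c_i$ at round $i$.

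First I would dispose of the centers themselves: each $c_i$ is set on fire at round $i \leq k \leq g-1 \leq 3g-3$, so there is nothing to check for these vertices. Next, for an arbitrary non-center vertex $v$, I would invoke the definition of the algorithm at the moment $v$ was processed: because $v$ was not added as a new center, there must have existed some already-present center $c_j$ with $\operatorname{dist}(v, c_j) \leq 2g-2$. The fire at $c_j$ is started in round $j$ and reaches every vertex within distance $t$ of $c_j$ by the end of round $j+t$, hence $v$ is burning by the end of round
\[
j + \operatorname{dist}(v, c_j) \;\leq\; (g-1) + (2g-2) \;=\; 3g-3.
\]

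The one subtlety I would flag is the temporal bookkeeping: the certifying center $c_j$ must be one that is actually in the final burning sequence, not merely a transient candidate. This is immediate because Burn-Guess only ever adds to the center set and never removes a center, so any $c_j$ present when $v$ was processed is still present at termination and still has index at most $k \leq g-1$. Combining the two cases over all vertices of $G$ then yields that all of $G$ is burning by round $3g-3$, which is the statement of the lemma. The argument is short and the main (mild) obstacle is just being careful that the distance-$2g-2$ witness is indexed in the range $1, \ldots, g-1$ so that the two sources of delay — the ignition round and the spreading distance — add to at most $3g-3$.
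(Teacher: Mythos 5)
Your proof is correct and follows essentially the same argument as the paper's: every non-center is within distance $2g-2$ of some center, each center is ignited by round $g-1$ (since a returned schedule has at most $g-1$ centers), and the two delays add to $3g-3$. Your extra bookkeeping (centers are never removed, so the distance witness survives to the final sequence) just makes explicit what the paper leaves implicit.
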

\begin{proof}
All non-center vertices are at distance at most $2g-2$ of at least one center. Recall that the burning schedule uses centers as activators. The fire starts at the last center at round $g-1$; all vertices within distance $2g-2$ of that center burn by round $3g-3$. 
We conclude that all non-center vertices burn by round $3g-3$. 
\end{proof}

We now arrive at our main result.

\begin{theorem}\label{mainresult}
There is a polynomial algorithm with approximation ratio of at most $3$ for burning any graph $G=(V,E)$.
\end{theorem}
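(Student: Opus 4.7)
The plan is to search for the smallest positive integer $g^{\star}$ such that Burn-Guess$(G, g^{\star})$ returns a burning sequence (rather than \textsc{Bad-Guess}), and to output that sequence. Such a $g^{\star}$ always exists with $g^{\star} \leq n+1$, since Burn-Guess$(G, n+1)$ can never accumulate $n+1$ distinct centers from only $n$ vertices and must therefore return a schedule. Concretely, I would iterate $g = 1, 2, \ldots, n+1$ and stop at the first success.

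The approximation guarantee then follows immediately from the two preceding lemmas. By Lemma \ref{schedule}, the returned sequence burns $G$ in at most $3g^{\star}-3$ rounds. For any nonempty $G$ we have $g^{\star} \geq 2$, because Burn-Guess$(G, 1)$ makes the first processed vertex a center and so returns \textsc{Bad-Guess} at once. Hence, by minimality of $g^{\star}$, the call Burn-Guess$(G, g^{\star}-1)$ returned \textsc{Bad-Guess}, and Lemma \ref{badguess} yields $\opt(G) \geq g^{\star}-1$. Combining, the algorithm uses at most $3(g^{\star}-1) \leq 3\,\opt(G)$ rounds, giving an approximation ratio of $3$.

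For the polynomial running time, each call to Burn-Guess$(G, g)$ scans the $n$ vertices in an arbitrary order and, for each vertex $v$, compares its distance to the current centers against the threshold $2g-2$. Precomputing all pairwise shortest-path distances via a BFS from each vertex costs $O(n(n+m))$ time, after which each call runs in $O(n^2)$ time; aggregating over at most $n+1$ values of $g$ keeps the total time polynomial in $n$ and $m$. The main issue is really in arranging the constants so that the two lemmas line up: the wrapper forces the discovered $g^{\star}$ to satisfy both $\opt(G) \geq g^{\star}-1$ and schedule length $\leq 3(g^{\star}-1)$ simultaneously, and it is precisely the matching factor of $3$ between these bounds that delivers the claimed approximation ratio. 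Once Lemmas \ref{badguess} and \ref{schedule} are in hand, no further combinatorial work is required.
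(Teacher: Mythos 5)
Your proposal is correct and follows essentially the same route as the paper: find the smallest $g^{\star}$ for which Burn-Guess returns a schedule, then combine Lemma~\ref{schedule} (cost at most $3g^{\star}-3$) with Lemma~\ref{badguess} applied to $g^{\star}-1$ (so $\opt(G)\geq g^{\star}-1$) to get ratio $3$. The only differences are cosmetic---you use a linear scan over $g$ with precomputed distances instead of the paper's BFS-based implementation with binary search, and you spell out the existence of $g^{\star}$ and the case $g^{\star}\geq 2$---none of which changes the argument.
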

\begin{proof}
Let $n= |V|$. The algorithm finds the smallest value $g^*$ 
for which Burn-Guess returns a schedule $(g^* \leq n)$. 
By Lemma \ref{schedule}, the schedule returned by Burn-Guess completes in at most $3g^*-3$ rounds. Meanwhile, since Burn-Guess returns Bad-Guess for $g^*-1$, be Lemma \ref{badguess}, no schedule completes in less than $g^*-1$ rounds. 
\end{proof}

%
It is not hard to see the upper bound in Theorem\ref{mainresult} is tight.
Consider the graph in Figure~\ref{fig:lowerSimpleAlg}, where $c_1, \ldots, c_k$ are the centers selected by the algorithm. 
Note the pairwise distance between any two centers is $2k$ and the distance of a non-center and a center is at most $2k-2$. So, Burn-Guess returns a schedule when its parameter is $g = k$ while it returns Bad-Guess when $g=k-1$. Assuming centers are burned in the same order, the cost of the algorithm is $3k-2$ (a fire starts at $c_k$ at round $k$ and reaches $b$ at round $3k-2$). 
On the other hand, there is a better scheme that burns vertex $a$ at round 1 and 
burns the middle point of the path $p$ between $c_k$ and $b$ at round 2. This scheme burns all vertices by round $k+1$. Consequently, the approximation ratio of the algorithm is at least $\frac{3k-2}{k+1}$ which converges to 3 for large values of $g$.

%

\begin{figure}[!h]
 \centering
\includegraphics[scale=.77,trim={2cm 11cm 12cm 22.4cm}, clip, scale=.7]{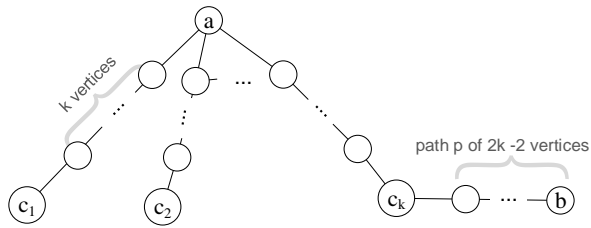}
\caption{An instance for which the scheme by the burning algorithm 
takes three times more rounds than the optimal algorithm to burn the graph.}\label{fig:lowerSimpleAlg}
\end{figure}

A straightforward implementation of the Burn-Guess uses breadth-first traversal of the graph. Starting with an unvisited node $v$, we add $v$ to the set of centers and apply breath first to visit all vertices within distance $2g-2$ of $v$. After reaching `depth' of $2g-2$, we stop the breath search and pick another unvisited vertex as the next center and start another breath first traversal. This process continues until all vertices are visited or the number of centers exceeds $g$. Clearly, any edge is visited at most once and hence Burn-Guess runs in \blue{time} $O(m)$. Since Burn-Guess is called $O(\log n)$ times (via a binary search in the space of $g$), we conclude that our algorithm for burning graphs runs in time $O(m \log n)$.

The above implementation is useful when the order in which vertices are processed is not defined by the algorithm. This is particularly handy when Burn-Guess has to work based on partial information; for example, a parallel setting where only a partition of the input graph is available to each processor. When there is no such restriction, we can apply optimizations like selecting the point located at the maximum distance to all current centers as the next center (this is similar to farthest-first algorithm for the metric $k$-center problem; see, for example, \cite{Vazir0004338}). While this optimization is likely to improve the approximation ratio \blue{(albeit with analysis techniques which would be more involved)} it degrades the running time: an efficient implementation of requires pre-computing all-pair shortest-path distances in $O(mn+n^2 \log n)$ using Dijkstra's algorithm. Provided with these distances, running an instance of Burn-Guess$(G,g)$ takes $O(n g)$, which is $O(n^2)$ for general graphs (and $O(n^{3/2})$ for connected graphs since $g\in O(\sqrt{n})$ when the graph is connected). Burn-Guess is called $O(\log n)$ times, which gives a total time complexity of $O(n^{2} \log n)$. This complexity is dominated by 
the $O(mn + n^2 \log n)$ of pre-computing pair-wise distances. 

\section{Approximation Algorithm for Trees}
In this section, we show that there is an algorithm with an approximation ratio of at most 2 for burning a tree $T$. In a way analogous to general graphs, the algorithm is based on a procedure Burn-Guess-Tree$(T,g)$ that guarantees the following for a given guess value $g$.
\begin{enumerate}
\item If the algorithm returns Bad-Guess, then any schedule for burning $T$ requires at least $g$ rounds to complete.
\item If the algorithm returns a schedule for burning $T$, then that schedule completes in at most $2g$ rounds.
\end{enumerate}

It is evident that, \blue{provided with the above guarantees,} the schedule returned for the smallest value of $g$ completes within twice the optimal schedule.

Given an input tree $T$, Burn-Guess-Tree$(T,g)$ selects an arbitrary node $s$ as the \emph{root} of the tree. The \emph{level} of a node $v$ is the distance of $v$ to $s$ and the \emph{$k$-ancestor} of $v$ is the vertex at distance $k$ from $v$ on its path to the root.
The procedure works in a number of steps and maintains a set of centers as well as a set of marked vertices. Initially, the set of centers is empty, and all vertices are unmarked.
At the beginning of a step $i$ (where $i\leq g$), the algorithm finds an unmarked vertex $v$ with the highest level. If the level of $v$ is at least $g$, then the $g$-ancestor of $v$ is added to the set of centers; otherwise, the root of $T$ is added to the set of centers. Meanwhile, all vertices within distance $g$ of the added center are marked. The procedure continues until all vertices are marked. In this case, the algorithm returns a burning sequence defined by an arbitrary ordering of centers as activators. If the number of centers becomes larger than $g$ before all vertices are marked, then the algorithm returns Bad-Guess. Figure \ref{fig:tree-process} illustrates the Burn-Guess-Tree procedure.

\begin{figure}[!b]
\centering
\includegraphics[scale=.67,trim={7cm 0cm 7cm 32.5cm}, clip, scale=.68]{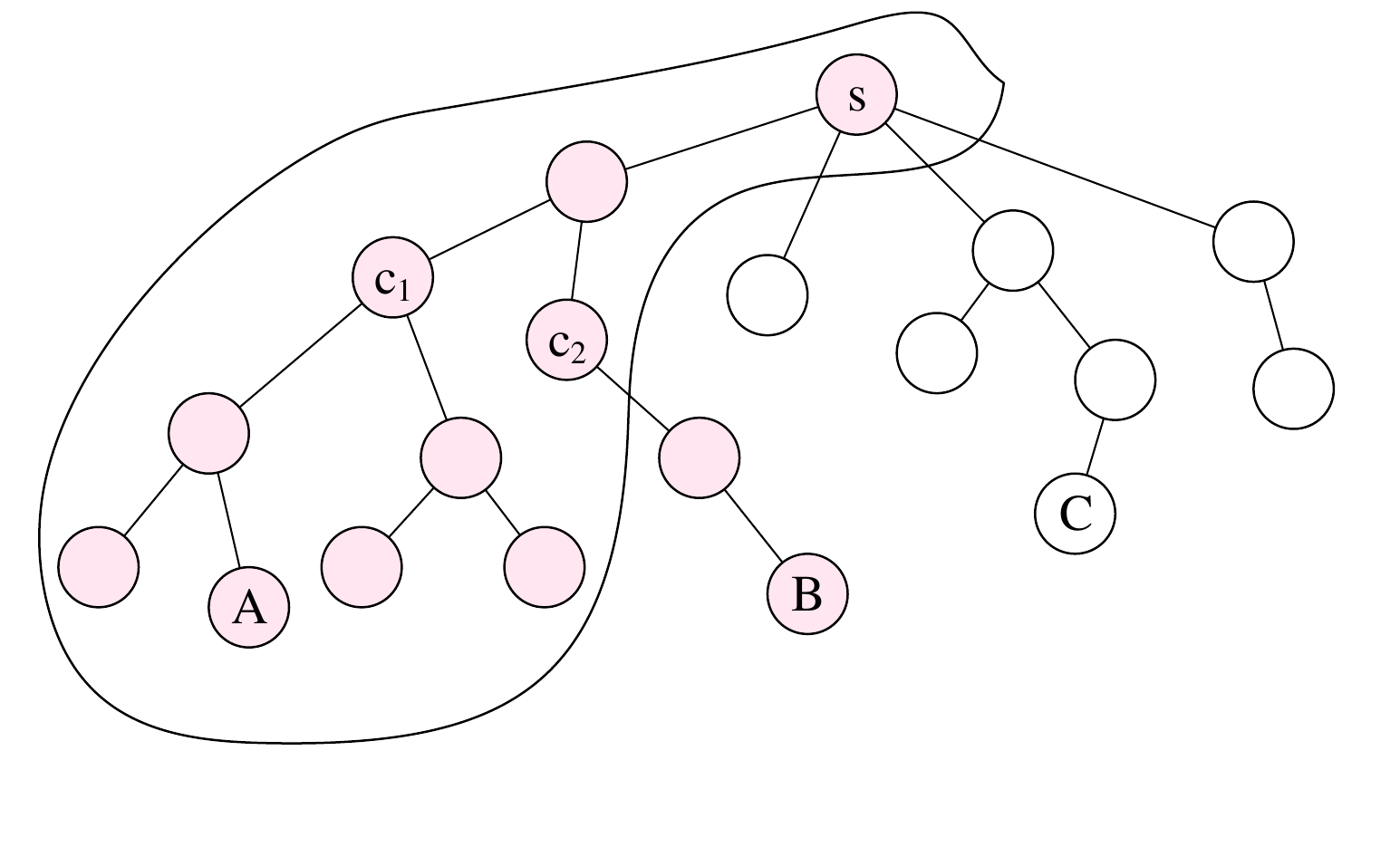}
\caption{An illustration of Burn-Guess-Tree with parameter $g=2$. The tree is rooted at $s$. The first selected vertex is $A$ and the first center is $c_1$. The next unmarked vertex with maximum level is $B$ and $c_2$ is selected as the next center. 
At this point, since there are still unmarked vertices (nodes that are not highlighted), the algorithm returns Bad-Guess. In the next iteration with $g=3$, the algorithm returns a schedule formed by the parent of $c_1$ and $s$.
}\label{fig:tree-process}
\end{figure}


\begin{lemma}\label{treeschedule}
If Burn-Guess-Tree$(T,g)$ returns a burning sequence, then the burning of that sequence completes in at most $2g$ rounds.
\end{lemma}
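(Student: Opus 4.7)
My plan is to mirror the argument for general graphs (Lemma~\ref{schedule}), but exploiting the better covering radius that the tree construction provides. Let $c_1,c_2,\dots,c_k$ denote the centers produced by Burn-Guess-Tree$(T,g)$, listed in the order they are added. Because the algorithm did not return Bad-Guess we have $k\le g$, and the returned burning sequence uses $c_i$ as the activator in round $i$. Once I can argue that every vertex of $T$ lies within distance $g$ of some center, the round-count bound follows by the same kind of triangle-inequality calculation as in the general case: for any vertex $u$ pick a center $c_i$ with $d_T(u,c_i)\le g$; then the fire lit at $c_i$ in round $i$ reaches $u$ by round $i+d_T(u,c_i)\le k+g\le 2g$.

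The substantive step, and the part I expect to spend the most care on, is the covering claim: \emph{at termination, every vertex of $T$ is within distance $g$ of at least one center}. I would prove this directly from the marking invariant maintained by the procedure. A vertex becomes marked exactly when it lies within distance $g$ of a newly added center, so if the algorithm returns a schedule (all vertices marked), then by induction on steps every vertex has a center within distance $g$. I would also briefly verify that each step does mark \emph{something} new, by examining the two cases in the definition of the center: either the selected unmarked vertex $v_i$ has level at least $g$ and its $g$-ancestor is added (so $v_i$ itself is marked, being at distance exactly $g$), or $v_i$ has level strictly less than $g$ and the root $s$ is added (so $v_i$ is marked, being at distance less than $g$). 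Together with the fact that the root is automatically within distance $g$ of itself, this shows progress and gives the covering claim.

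With the covering claim in hand, the second paragraph is then the bookkeeping already sketched: $i\le k\le g$ and $d_T(u,c_i)\le g$ combine to give $i+d_T(u,c_i)\le 2g$ for every vertex $u$, so the whole tree is burned by round $2g$. As in the proof of Lemma~\ref{schedule}, I will treat the burning sequence as a fixed list of activators in the algorithm-chosen order, not worrying about whether some $c_i$ might happen to be already burning when round $i$ arrives — this only accelerates the process and does not affect the upper bound on the total number of rounds. The main obstacle is really just articulating the covering claim cleanly, since once it is established the rest of the proof is essentially the same triangle inequality used for general graphs, but with the improved radius $g$ in place of $2g-2$.
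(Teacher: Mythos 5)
Your proposal is correct and follows essentially the same route as the paper: the marking rule guarantees every vertex is within distance $g$ of a center, all $k\le g$ centers are ignited by round $g$, and the triangle-inequality bound $i+d_T(u,c_i)\le 2g$ finishes the argument. The extra verification that each step marks a new vertex is harmless but not needed for this lemma, since the hypothesis that a schedule is returned already means every vertex got marked.
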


\begin{proof}
Since all vertices are marked, they are all within distance $g$ of a center. In the returned schedule, a fire is activated at all centers by round $g$, and consequently, all vertices are burned by round $2g$.
\end{proof}

Define a `$g$-site partition' as a set of at most $g$ vertices, called $g$ `sites', so that every vertex is within distance $g$ of its closest site.
We say that the tree admits the `$g$-site condition' if it has a $g$-site partition.
Clearly, in order to \blue{burn a tree} in less than $g$ rounds, the tree should pass the $g$-site condition; otherwise, any set of at most $g$ activators \blue{leaves} a vertex outside of \blue{combination of the spheres of all the activators} 
and hence, the burning process cannot complete within $g$ rounds.

\begin{lemma}\label{treebadg}
If Burn-Guess-Tree$(T,g)$ returns Bad-Guess, then $T$ does not admit $g$-site condition.
\end{lemma}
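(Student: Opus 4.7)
The plan is to exhibit $g+1$ vertices of $T$ that together rule out any $g$-site partition. In the Bad-Guess branch the procedure has identified $g+1$ successive ``deepest unmarked'' vertices $v_1, v_2, \ldots, v_{g+1}$ (one per step) without being able to finish with only $g$ centers. My goal is to prove that these vertices are pairwise at tree-distance at least $2g+1$; once this is established, a pigeonhole argument concludes the proof, because if $T$ admitted a $g$-site partition then two of the $v_k$'s would be covered by the same site $s_\ell$, forcing $d(v_i, v_j) \leq 2g$ and contradicting the bound.

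The heart of the argument is this pairwise distance inequality. Fix $i < j$. Because the greedy rule always picks an unmarked vertex of maximum level, the level sequence $\mathrm{level}(v_1) \geq \mathrm{level}(v_2) \geq \cdots$ is non-increasing, and since $v_j$ remained unmarked after step $i$ we have $d(v_j, c_i) \geq g+1$, where $c_i$ is the $g$-ancestor of $v_i$ and satisfies $d(v_i, c_i) = g$ by construction. I would then show that $c_i$ lies on the tree-path from $v_i$ to $v_j$: letting $\ell$ denote the least common ancestor of $v_i$ and $v_j$, if $\ell$ were a descendant of (or equal to) $c_i$ then $v_j$ would be a descendant of $c_i$, and combined with $\mathrm{level}(v_j) \leq \mathrm{level}(v_i)$ this would give $d(v_j, c_i) = \mathrm{level}(v_j) - \mathrm{level}(c_i) \leq g$, a contradiction. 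Hence $\ell$ lies strictly above $c_i$ on the root path, the tree-path from $v_i$ to $v_j$ passes through $c_i$, and
\[
d(v_i, v_j) \;=\; d(v_i, c_i) + d(c_i, v_j) \;\geq\; g + (g+1) \;=\; 2g + 1.
\]

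A small bookkeeping item I would dispatch first is the case in which some $v_i$ has level less than $g$: in that case the procedure adds the root $s$ as the center and marks every remaining vertex in a single step, so Bad-Guess can no longer be triggered. Thus in the Bad-Guess branch every $v_i$ with $i \leq g$ has level at least $g$ and a well-defined $g$-ancestor $c_i$, and the path-through-$c_i$ argument above applies uniformly to every pair, including pairs involving $v_{g+1}$ (regardless of its level) because the relevant center in those pairs is $c_i$ with $i \leq g$. I expect the main subtlety to lie precisely in this path-identification step, since it is what upgrades the one-sided inequality $d(v_j, c_i) \geq g+1$ into the genuine two-sided pairwise separation $d(v_i, v_j) \geq 2g+1$ needed to power the final pigeonhole.
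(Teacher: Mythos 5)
Your proof is correct, but it proceeds quite differently from the paper's. The paper argues by exchange: assuming a $g$-site partition exists, it iteratively swaps sites of that partition for the centers $c_1,\ldots,c_g$ chosen by Burn-Guess-Tree, showing the partition property is preserved at each swap, until the centers themselves would form a $g$-site partition --- contradicting the unmarked vertex that triggered Bad-Guess. You instead exhibit an explicit obstruction: the $g$ deepest-unmarked vertices $v_1,\ldots,v_g$ chosen by the greedy rule, together with one vertex $v_{g+1}$ still unmarked at termination, are pairwise at distance at least $2g+1$, and then a pigeonhole over the at most $g$ sites kills any $g$-site partition. Your key step checks out: for $i<j$, both the least common ancestor $\ell$ of $v_i,v_j$ and the center $c_i$ lie on the root path of $v_i$, hence are comparable; if $\ell$ were at or below $c_i$, then $v_j$ would be a descendant of $c_i$ with $\mathrm{level}(v_j)\le\mathrm{level}(v_i)$ (which indeed holds, since $v_j$ was still unmarked when $v_i$ was chosen as deepest), forcing $d(v_j,c_i)\le g$ and contradicting $v_j$ being unmarked after step $i$; so $c_i$ lies on the $v_i$--$v_j$ path and $d(v_i,v_j)\ge g+(g+1)$. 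Your bookkeeping that in the Bad-Guess branch every selected vertex has level at least $g$ (otherwise the root is added and marks everything, so the run ends with a schedule) is also right, and the level comparison extends to $v_{g+1}$ because it too was unmarked at every earlier step. What your route buys is a stronger and more reusable conclusion: $g+1$ vertices at pairwise distance at least $2g+1$ directly lower-bound the burning number by $g+1$ via Lemma~\ref{lem1}, so one could bypass the $g$-site machinery in the theorem altogether; the paper's swap argument, by contrast, is tied to the greedy structure and shows the algorithm's centers are ``as good as'' any feasible site set, but yields only the $g$-site statement itself.
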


\begin{proof}

Consider otherwise; that is, there is a $g$-site partitioning $P$ defined by at most $g$ sites 
so that every vertex in the tree is within distance $g$ of one of the sites in $P$.
We show that it is possible to update $P$ so that it is still a $g$-site partitioning while having the set of $g$ activators selected by Burn-Guess-Tree$(T,g)$ as its set of sites. If that it is true, then every vertex is within distance $g$ of the $g$ centers selected by the algorithm.
However, we know at the time Bad-Guess was returned, there was an unmarked node at distance more than $g$ of the closest center (contradiction).

Let $c_1, c_2, \ldots, c_g$ be the centers selected by Burn-Guess-Tree (in the same order they are selected). We iteratively update $P$ by including these centers in its set of sites. At the beginning of iteration $i$, $P$ has $c_1, \ldots, c_{i-1}$ in it set of sites. In Burn-Guess-Tree, vertices at distance $g$ of these centers are marked. Let $v$ be the unmarked vertex with maximum distance from a marked node; so, \blue{following the definition of the Burn-Guess-Tree,} $c_i$ is the $g$-ancestor of $v$. Since $P$ is a $g$-site partitioning, it should have a site $c'$ within distance $g$ of $v$. Note that such site cannot be any of $c_j$ with $j<i$ since $v$ is unmarked. We argue that if $c'$ is replaced with $c_i$ in $S$, the partitioning still remains a $g$-site partitioning. For that, we show unmarked vertices within distance $g$ of $c'$ form a subset of unmarked vertices within distance $g$ of $c_i$. Consider otherwise; that is, there is an unmarked vertex $w$ at distance more than $g$ of $c_i$ and within distance $g$ of $c'$. If $w$ is in the tree rooted at $c_i$, then level of $w$ will be more than $v$ that contradicts $v$ being the unmarked vertex with the highest level. Next assume $w$ is outside of the tree rooted at $c_i$. Since $w$ has distance more than $g$ to $c_i$ and is within distance $g$ of $c'$, we conclude that $c'$ should be outside of the tree rooted at $c_i$; this contradicts $v$ being within distance $g$ of $c'$ (since $c_i$ is at distance $g$ of $v$). To summarize, after replacing $c'$ with $c_i$ in $P$, the partitioning remains a $g$-site partitioning. After repeating this process $g$ times, $P$ will be a $g$-site partitioning formed by the $g$ centers selected by Burn-Guess-Tree that is a contradiction as mentioned above.
\end{proof}

\begin{theorem}
There is a polynomial time algorithm with approximation ratio of at most 2 for burning a tree.
\end{theorem}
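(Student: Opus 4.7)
The plan is to mirror the guess-and-check strategy from the proof of Theorem \ref{mainresult}, exploiting the two guarantees of Burn-Guess-Tree established in Lemmas \ref{treeschedule} and \ref{treebadg}. Concretely, I would run Burn-Guess-Tree$(T,g)$ for $g = 1, 2, \ldots, n$, either linearly or by binary search, locate the smallest guess $g^*$ for which the procedure returns a burning sequence, and output that sequence as the algorithm's schedule. Monotonicity of the search --- if Burn-Guess-Tree returns a schedule for $g$, it also returns one for $g+1$ --- is immediate, since the $g$-site condition is monotone in $g$ (the same sites still serve at distance $g+1$) and hence, by the contrapositive of Lemma \ref{treebadg}, so is the event ``returns a schedule''.

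By Lemma \ref{treeschedule}, the returned schedule completes burning in at most $2g^*$ rounds, so it suffices to show that $g^* \leq \opt(T)$. For this I would take any optimal schedule that burns $T$ in $k = \opt(T)$ rounds with activators placed at rounds $1, 2, \ldots, k$, and observe that the $i$-th activator burns every vertex within distance $k - i$ of it. Since the union of these $k$ balls covers $V(T)$, every vertex lies within distance at most $k - 1 \leq k$ of one of these $k$ activators; thus $T$ admits a $k$-site partition whose sites are precisely the optimal activators. Applying the contrapositive of Lemma \ref{treebadg} then yields that Burn-Guess-Tree$(T,k)$ does not return Bad-Guess, and therefore $g^* \leq k = \opt(T)$. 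Combining the two bounds, the output schedule uses at most $2g^* \leq 2\,\opt(T)$ rounds, giving the claimed ratio.

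Finally, I would confirm polynomial running time: a single invocation of Burn-Guess-Tree can be implemented over the rooted tree by maintaining unmarked vertices together with their levels and, at each step, selecting a deepest unmarked vertex and computing its $g$-ancestor, which costs $O(n)$ per step and at most $O(n^2)$ per invocation in total; the search over $g$ contributes only an extra $O(\log n)$ factor via binary search. The one subtlety I would be most careful about is the step that pins $g^* \leq \opt(T)$: namely, that an optimal $k$-round schedule witnesses \emph{exactly} the $k$-site condition rather than a weaker $(k{+}1)$-site one, which is what yields the clean ratio of $2$ instead of $2 + O(1/\opt)$.
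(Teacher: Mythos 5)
Your proposal is correct and follows essentially the same route as the paper: search for the smallest $g^*$ for which Burn-Guess-Tree returns a schedule, bound the schedule by $2g^*$ via Lemma \ref{treeschedule}, and bound \opt via the site condition and Lemma \ref{treebadg}. The only cosmetic difference is that you apply the contrapositive of Lemma \ref{treebadg} at $g=\opt(T)$ (an optimal $k$-round schedule yields a $k$-site partition, so $g^*\leq\opt$), while the paper applies the lemma at $g^*-1$ to get $\opt\geq g^*$; the two arguments are equivalent.
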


\begin{proof}
The algorithm finds the smallest value $g^*$ for which Broad-Guess-Tree returns a schedule. By Lemma \ref{treeschedule}, such a schedule burns the graph in at most $2g^*$ rounds. Meanwhile, since Burn-Guess-Tree returns Bad-Guess for $g^*-1$, by Lemma \ref{treebadg}, the tree does not have any $(g^*-1)$-site partition 
and hence, the cost of $\opt$ is more than $g^*-1$. In summary, the cost of the algorithm is at most $2g^*$, and the cost of the optimal solution is at least $g^*$.
\end{proof}

\section{Algorithms for Disjoint Paths}
Consider the burning problem when the input is a disjoint forest of paths. This problem is NP-hard and a 1.5 approximation exists for it \cite{2017arXiv170709968B}.
In this section, we present exact and approximation algorithms for this problem. When the graph is formed by a constant number of paths, we provide an exact algorithm that runs in polynomial time. For the more interesting case when the graph is formed by a non-constant number of paths,  
we provide \blue{two approximation} schemes for the problem. Throughout the section, we assume the input is a graph of size $n$ formed by $b$ paths of length $n_1,n_2, \ldots, n_b,$ where $n_i \leq n_{i+1}$.

\subsubsection*{Constant Number of Disjoint Paths}
We show that when the number of disjoint paths is constant, there is a polynomial time algorithm which provides optimal solution.
We call a graph $G$ an $r$-\emph{subset of graph} $G'$ if both $G$ and $G'$ are formed by $b$ disjoint paths of the same lengths, except for one path $p$ which has length $x$ in $G$ and length $x + i$ in $G'$ for some $i$ in the range $[0,2r+1]$. 

\begin{lemma}\label{dplemma}
A graph $G'$ formed by a forest of disjoint paths can be burned in $t$ rounds if and only if it has a $t$-subset $G$ which can be burned in $t-1$ rounds.
\end{lemma}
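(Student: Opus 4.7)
\emph{Proof plan.} My plan is to handle the two directions of the biconditional by transforming burning schedules between $G$ and $G'$, using the observation that the first activator of a $t$-round schedule owns a ball of radius $t-1$---a contiguous sub-path of at most $2t-1$ vertices.

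For the ($\Leftarrow$) direction, given a schedule $\sigma=(u_1,\ldots,u_{t-1})$ for $G$, I will build a schedule for $G'$ by executing $\sigma$ at rounds $2,\ldots,t$ and igniting a new activator $v^{\star}$ at round $1$ inside the $i$-vertex extension of the elongated path. The shifted fires preserve their ball radii by round $t$ and therefore cover $V(G)\subseteq V(G')$; I will then choose $v^{\star}$ so that its radius-$(t-1)$ ball covers all of the extra vertices. This is immediate when $i\le 2t-1$, and for the boundary cases $i\in\{2t,2t+1\}$ I plan to rely on spillover: any fire of $\sigma$ that reaches the old endpoint of the elongated path strictly before round $t-1$ extends past it into the extension in $G'$, supplying the one or two additional covered vertices that are needed.

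For the ($\Rightarrow$) direction, given a schedule $(v_1,\ldots,v_t)$ for $G'$ with $v_1$ on some path $p_0$, I will let $B := B(v_1, t-1)$, a contiguous sub-path of $p_0$ of at most $2t-1$ vertices. Set $H := G' \setminus B$; this is a path forest, possibly with $b+1$ paths if $v_1$ is strictly interior to $p_0$. I then form $G$ by reconnecting the two leftover halves of $p_0 \setminus B$ into a single path via an added edge, so $G$ is a forest of $b$ paths agreeing with $G'$ except that $p_0$ is shortened by at most $2t-1 \le 2t+1$ vertices---hence a $t$-subset of $G'$. The activators $(v_2,\ldots,v_t)$, re-indexed to rounds $(1,\ldots,t-1)$, cover $V(H)=V(G)$ in $H$ by identical spread times; since $G$ has at least as many edges as $H$, fires spread at least as fast in $G$, and the same schedule still burns all of $G$ within $t-1$ rounds.

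The main obstacle will be the interior case of the forward direction: after adding the reconnection edge it is conceivable that some activator $v_j$ becomes already on fire at its new activation round in $G$, because the new edge shortens a distance. I plan to bypass this by invoking that the burning number is insensitive to whether activators are required to be non-burning---any activator that happens to already be on fire can be relocated to any non-burning vertex without losing coverage---so the re-used schedule is legitimate in $G$ after a cosmetic adjustment. Nailing down this activator-validity step, together with carefully verifying the spillover count for the boundary cases $i\in\{2t,2t+1\}$ in the backward direction, will be where the detailed work lies.
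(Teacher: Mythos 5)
Your main-case argument is essentially the paper's: its forward direction removes the vertices burned by the round-1 fire and reuses the remaining schedule, and its backward direction adds a round-1 fire centred inside the extension of the elongated path. Your re-gluing of the two leftover halves and the activator remark only make explicit details the paper glosses over, and they are sound; note the one further case you should cover is an activator $v_j$ ($j\ge 2$) lying \emph{inside} the removed ball $B(v_1,t-1)$ and hence absent from $G$ --- since its radius is at most $t-2$ while the two vertices flanking an interior removed ball are at distance $2t$ apart, its ball leaves $B$ on at most one side, so it can be relocated to the boundary vertex on that side without losing coverage.

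The genuine gap is exactly the step you defer: the ``spillover'' treatment of $i\in\{2t,2t+1\}$ in the backward direction. No fire of $\sigma$ is guaranteed to reach the old endpoint strictly before its last round --- for example, $P_4$ is burned in two rounds by activating the second vertex and then the fourth, so the old endpoint is burned by a radius-$0$ fire and nothing spills over --- and in fact the implication is false at these values: take $t=3$, $G=P_4$ (burnable in $2$ rounds) and $G'=P_{10}$, which fits the paper's definition with $i=2t=6$, yet three fires cover at most $5+3+1=9<10$ vertices of a path, so $G'$ cannot be burned in $3$ rounds. The culprit is an off-by-one in the paper itself: a fire started at round $1$ burns a ball of radius $t-1$, hence at most $2t-1$ path vertices, not the $2t+1$ asserted in the paper's proof, and the lemma is correct (and is all the dynamic program in the subsequent theorem uses, where the update is over $i\le 2t-1$) only if the range in the definition of a $t$-subset is $[0,2t-1]$. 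Within that corrected range your plan goes through with no spillover argument needed; as stated, the boundary cases cannot be repaired by any argument.
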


\begin{proof}
Assume $G'$ can be burned in $t$ rounds. Remove all vertices burned through the fire started at round 1. There will be at most $2t+1$ such vertices. Removing them will form a $t$-subset of $G$ and the same schedule can be used to burn that subgraph in $t-1$ rounds. Next, assume $G'$ has a $t$-subset $G$ which can be burned in $t-1$ rounds. To burn $G'$, we use the same schedule for burning $G$ except that at round 1 a fire is started at a node at distance $t$ of one endpoint of path $p$ which differentiates the two graphs. By round $t$, $2t+1$ vertices at distance $t$ of that node are burned. The remaining vertices that form $G$ can be burned in $t$ rounds following the same burning schedule for $G$. 
\end{proof}

The above lemma helps us devise a straightforward dynamic programming solution. \blue{We fill a table of size polynomial to $n$ (size of the graph) which has a boolean entry for each graph formed by $b$ paths of total size at most $n$ and for each deadline value $\tau$ (which is at most $n$). Such entry indicates whether the graph can be burned in $\tau$ rounds.  Using Lemma \ref{dplemma}, we can fill the table in a bottom-up approach. Additional \blue{bookkeeping} when filling the table leads us to the optimal burning scheme. 
}

\begin{theorem}
Given a graph of size $n$ formed by a forest of $b = \Theta(1)$ disjoint paths, there is an algorithm that generates an optimal burning scheme. The time complexity of the algorithm is polynomial in $n$.
\end{theorem}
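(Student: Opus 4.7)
The plan is to translate Lemma \ref{dplemma} into a bottom-up dynamic program whose state encodes a tuple of current path lengths together with a candidate burning deadline. Define a Boolean table $M[x_1,\ldots,x_b,\tau]$, where each $x_i\in\{0,1,\ldots,n\}$ and $\tau\in\{0,1,\ldots,n\}$, set to true precisely when a forest of $b$ disjoint paths of lengths $x_1,\ldots,x_b$ admits a burning schedule of length at most $\tau$. The burning number of an $n$-vertex graph is trivially at most $n$, so this range for $\tau$ is sufficient. Because $b=\Theta(1)$, the table has $O(n^{b+1})$ entries. The base cases are $M[0,\ldots,0,\tau]=\mathrm{true}$ for every $\tau\ge 0$, and $M[x_1,\ldots,x_b,0]=\mathrm{false}$ whenever some $x_i>0$.

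The recurrence is the immediate reading of Lemma \ref{dplemma}: for $\tau\ge 1$,
\[
M[x_1,\ldots,x_b,\tau]\ =\ \bigvee_{j=1}^{b}\ \bigvee_{i=0}^{\min(2\tau+1,\,x_j)}\ M\big[x_1,\ldots,x_j-i,\ldots,x_b,\tau-1\big],
\]
where $(j,i)$ records the index of the path from which the round-$1$ fire's ball is peeled off and the number of vertices it consumes. Evaluating a single entry takes $O(b\cdot n)$ time, so filling the whole table costs $O(b\cdot n^{b+2})$, which is polynomial in $n$ for constant $b$. The optimum burning number is $g^{*}=\min\{\tau : M[n_1,\ldots,n_b,\tau]=\mathrm{true}\}$, which can be located either by sweeping $\tau$ upward or by binary search. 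To recover an explicit optimal schedule, I would attach to each true entry a backpointer storing the witnessing pair $(j,i)$ together with the position along path $j$ at which the round-$1$ fire of that subproblem is placed; unrolling the pointers from $M[n_1,\ldots,n_b,g^{*}]$ downward and then reversing yields a burning sequence of length exactly $g^{*}$.

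The main subtlety is the geometric bookkeeping that connects the abstract ``$\tau$-subsets'' of the lemma to legal first-round fires: one has to ensure that in every step the $i$ removed vertices form the exact set burned by one round-$1$ fire, and in particular that shortening a path does not inadvertently split it into two pieces and increase $b$. As the proof of Lemma \ref{dplemma} shows, it is always possible to place the round-$1$ fire so that the burned ball lies at an end of its path, which preserves the ``$b$ paths'' invariant of the state space and lets the backpointer store just the endpoint offset. With that detail handled, correctness of the DP follows by induction on $\tau$ using the ``only if'' direction of the lemma to justify that any feasible schedule decomposes into one peel plus a recursive subproblem, and optimality is immediate from taking the smallest $\tau$ for which the top-level entry is true.
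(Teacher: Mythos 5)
Your proposal is essentially the paper's own proof: the same Boolean table indexed by a deadline and the $b$ path lengths, filled via the recurrence given by Lemma~\ref{dplemma}, with $O(n^{b+1})$ entries and backpointers to extract the optimal schedule. You merely write the recurrence in ``pull'' rather than ``push'' form and spell out the base cases and the end-of-path placement subtlety a bit more explicitly, so the argument matches the paper's and is correct.
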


\begin{proof}
Consider a dynamic-programming table $A$ of dimension $b+1$. Here, $A[t][x_1][x_2]\ldots[x_b]$ is a boolean value which indicates whether it is possible to burn a forest of $b$ paths within $t$ rounds, where the first path of the forest has length $x_1$, the second path has length $x_2$, and so on. In other words, an entry in the table is associated with a \emph{deadline} time $t$ (first dimension) and a graph formed by $b$ disjoint paths (subsequent $b$ dimensions). Note that the first dimension takes values between 1 and $n$ (the upper bound for burning time), while any other dimension takes values between 1 and $n_b$, where $n_b$ is the maximum length of any path. Consequently, the size of the table is $O(n^{b+1})$, which is polynomial in $n$ for constant values of $b$. To find the optimal burning time, after filling the table, find the smallest $t^*$ for which $A[t^*][n_1][n_2]\ldots[n_b]$ is True; recall that $n_1,\ldots,n_b$ are the lengths of paths in the input forest. Additional \blue{bookkeeping} when filling the table leads us to the optimal burning scheme which completes in $t^*$ rounds.

Next, we describe how to fill the table. Assume the table is processed (filled) for values up to $t-1$ for the first dimension; that is, the entries for graphs which can be burned within deadline $t-1$ are set to True. By Lemma~\ref{dplemma}, graphs with True entries for deadline $t$ have a $t$-subset with True entry for deadline $t-1$. Hence, for any entry with True value associated with deadline $t-1$ and graph $G'$, we set all entries associated with deadline $t$ and graphs having $G'$ as a $t$-subset to be True. In other words, if entry $A[t-1][x_1][x_2]\ldots[x_b]$ is true, then for any $j \leq b$, the entry $A[t][x_1][x_2]\ldots[x_j + i]\ldots x[b]$ will be set to True for $i\leq 2t-1$. In doing so, we also record the value of $j$. In this fashion, we can retrieve a burning schedule by looking at the index of the path at which a fire is started in each given round.
%
%
%
\end{proof}

\subsubsection*{FPTAS for Non-constant Number of \blue{\Reg} Disjoint Paths}
In this section, we use a reduction to the \emph{bin covering problem} to show the burning problem admits a \blue{\emph{fully} polynomial time approximation scheme (FPTAS)} when the input graph is formed by a non-constant number of \blue{`\reg'} disjoint paths. \blue{Here, we assume the paths are regular in the sense that the lengths of all paths are asymptotically equal.}
The bin covering problem is the dual to the classic bin packing problem and can be defined as follows.

\begin{definition}
The input to the bin covering problem is a multi-set of items with sizes in the range (0,1]. The goal is to `cover' a maximum number of size 1 with these items. By covering a bin, we mean assigning a multiset of items with total size at least 1 to the bin.
\end{definition}

Bin covering is NP-hard  \cite{Assman}; 
but there is an FPTAS for the problem:

\begin{lemma}\cite{JansenS03}\label{Jensen}
There is an algorithm A that, given a multiset $L$ of $n$ items with sizes $s(a_i)\in
(0, 1]$ and a positive number $\epsilon_0 > 0$, produces a bin covering of $L$ such that $A(L) \geq (1-\epsilon_0) OPT(L)$, assuming $OPT(L)$ is sufficiently large. The time complexity of A is polynomial in $n$ and $1/\epsilon_0$.
\end{lemma}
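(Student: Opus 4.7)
The plan is to prove this along the lines of the standard APTAS/FPTAS toolkit for bin packing problems, adapted (as in Jansen and Solis--Oba) to the covering setting. The first move is a classical size separation: call an item \emph{large} if $s(a_i) \geq \epsilon_0$ and \emph{small} otherwise. Small items will be injected greedily at the end, so the core combinatorial work is on large items, of which there are $O(1/\epsilon_0)$ per covered bin. This separation is the natural way to bound the ``configuration space'' of a covered bin.

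Next I would apply a linear/geometric grouping step to the large items (analogous to Karmarkar--Karp grouping for bin packing). By sorting large items by size and bucketing them into $O(1/\epsilon_0^2)$ groups of equal cardinality, and then rounding sizes \emph{down} within each group to the group's minimum, we obtain an instance $L'$ with only $k = O(1/\epsilon_0^2)$ distinct large-item sizes. Because we round sizes down, any bin covering of $L'$ yields a valid covering of $L$ with at least as many covered bins; conversely, a standard shifting argument (pair each rounded group with the next larger original group) shows that $\text{OPT}(L') \geq \text{OPT}(L) - O(1/\epsilon_0^2)$, so provided $\text{OPT}(L)$ is sufficiently large (say $\Omega(1/\epsilon_0^3)$), the loss is absorbed within a $(1-\epsilon_0)$ factor.

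With only $k$ distinct large sizes, bin configurations for large items become vectors in $\mathbb{Z}_{\geq 0}^k$, and the set of feasible configurations has polynomial size in $n$ (since each coordinate is at most $1/\epsilon_0$). I would formulate a configuration LP maximizing the number of bins covered, subject to item-supply constraints; this LP has $n^{O(1/\epsilon_0^2)}$ variables and $k$ constraints. One can solve (or $\epsilon_0$-approximate) this LP in time polynomial in $n$ and $1/\epsilon_0$ via the ellipsoid method or column generation, and then round the fractional solution to an integral one with loss bounded by the number of nonzero fractional variables, which is at most $k$ (again absorbed when $\text{OPT}$ is large). Finally, distribute the small items greedily: any uncovered-but-partially-filled bin whose shortfall is at most $\epsilon_0$ can be topped up using small items, and any leftover small items of total size $\geq 1$ can open new bins. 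Comparing against an arbitrary optimum (which we may assume, by a swap argument, uses small items only to top up), one gets $A(L) \geq (1-\epsilon_0)\,\text{OPT}(L)$.

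The main obstacle, as in all such schemes, is controlling the \emph{additive} slack introduced by grouping, LP rounding, and the greedy small-item step simultaneously; each contributes an $O(1/\epsilon_0^{O(1)})$ additive error, and the step ``assuming $\text{OPT}(L)$ is sufficiently large'' in the lemma is precisely what converts this combined additive loss into a multiplicative $(1-\epsilon_0)$ guarantee. Since the full construction is carried out in \cite{JansenS03}, in our application we simply invoke the lemma as a black box.
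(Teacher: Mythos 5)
The paper itself gives no proof of Lemma~\ref{Jensen}: it is imported verbatim from \cite{JansenS03} and used purely as a black box, so your closing remark that one simply invokes the cited result is exactly how the paper treats it, and there is no internal argument to compare yours against. Your reconstruction of the cited proof is a reasonable outline of the standard bin-covering AFPTAS machinery (separate large and small items, round large sizes \emph{down} via grouping so that a covering of the rounded instance remains feasible for the original, solve a configuration LP for the large items, then distribute small items greedily, absorbing all additive losses through the ``$OPT(L)$ sufficiently large'' clause, which is precisely how the asymptotic guarantee of \cite{JansenS03} is stated). Two cautions if you ever wanted to flesh this out: first, the configuration LP cannot be written down explicitly within a \emph{fully} polynomial time bound --- its number of columns is exponential in $1/\epsilon_0$ (not $n^{O(1/\epsilon_0^2)}$ as you state, though either way it is too many) --- so it must be handled implicitly via approximate pricing/column generation with an FPTAS knapsack oracle, which is in essence what Jansen and Solis-Oba do through a max-min resource sharing subroutine; your parenthetical ``(or $\epsilon_0$-approximate)'' gestures at this but it is the technical heart of making the scheme fully polynomial. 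Second, the claim that one ``may assume, by a swap argument,'' that an optimal covering uses small items only to top up bins is not literally true (optimal solutions can cover bins entirely with small items); the standard analysis instead bounds the contribution of small items by a total-size (weight) argument. Neither point undermines your plan, since the full construction lives in the cited reference, but they are the places where a self-contained proof would need real work.
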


To provide an FPTAS for the burning problem on \blue{regular} disjoint paths, we reduce the problem to the bin covering problem. Before presenting the reduction, we state two lemmas with respect to the bin covering problem:

\begin{lemma}\label{twobins}
Assume two bins $B_1$ and $B_2$ are covered with a multiset of items so that $B_1$ only includes items of sizes at most $1/3$ and $B_2$ includes two items of size at least $2/3$. It is possible to modify the covering so that each bin has an item of size at least $2/3$ and both bins are still covered.
\end{lemma}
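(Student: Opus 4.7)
The plan is to transfer one of the two large items from $B_2$ into $B_1$, and then, if necessary, compensate by shifting some of $B_1$'s small items back to $B_2$ to re-cover $B_2$. After the transfer, each bin automatically contains an item of size at least $2/3$ (the two large items get split across the two bins), so the only remaining obligation will be to preserve the coverage of both bins.

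Concretely, let $a$ and $b$ denote two items of size at least $2/3$ in $B_2$, let $c$ be the total size of the remaining items of $B_2$, and let $S_1\ge 1$ be the total size of $B_1$, whose items are all of size at most $1/3$. First I would move $a$ into $B_1$, which raises $B_1$'s total to $S_1+a\ge 5/3$ and drops $B_2$'s total to $b+c\ge 2/3$. If $b+c\ge 1$ the proof ends immediately. Otherwise, I would shift the original small items of $B_1$ into $B_2$ one at a time, stopping the instant $B_2$'s total first reaches $1$. Termination is ensured since moving all of $B_1$'s original items would give $B_2$ total $b+c+S_1\ge 1$.

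The heart of the argument, and the main obstacle I anticipate, will be checking that $B_1$ is still covered at the halting moment. Because every moved item has size at most $1/3$, the final total of $B_2$ lies in $[1,\,4/3)$, so the amount transferred out of $B_1$ is at most $4/3-(b+c)$. Thus $B_1$ retains total size at least
\[
S_1 + a - \bigl(4/3-(b+c)\bigr) \;\ge\; 1 + \tfrac{2}{3} + \tfrac{2}{3} + c - \tfrac{4}{3} \;=\; 1+c \;\ge\; 1,
\]
so $B_1$ remains covered and still contains the large item $a$, while $B_2$ is covered and still contains $b$. The lemma's thresholds $1/3$ and $2/3$ are tight in this computation: the $1/3$ cap on $B_1$'s items is exactly what keeps the overshoot small enough, and the bounds $S_1\ge 1$, $a\ge 2/3$, and $b\ge 2/3$ are each needed to make the displayed chain of inequalities pass with no slack. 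Beyond this numerical bookkeeping I do not foresee any further combinatorial difficulty.
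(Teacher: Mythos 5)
Your proof is correct and follows essentially the same exchange argument as the paper: swap one large item of $B_2$ into $B_1$ and compensate by greedily moving items of size at most $1/3$ from $B_1$ to $B_2$, relying on the $1/3$ cap to control the overshoot. The only cosmetic difference is that the paper fixes the transferred subset's total size in $(1/3,2/3]$ up front, whereas you stop once $B_2$ is re-covered and then bound $B_1$'s loss; the underlying idea is identical.
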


\begin{proof}
\blue{Since} all items in $B_1$ have size at most 1/3, it is possible to select a subset $S$ of these items which has total size in $(1/3,2/3]$ (start with an empty $S$ and repeatedly add items until the total size is in the desired range). Move items of $S$ from $B_1$ to $B_2$ and move an item of size at least $2/3$ from $B_2$ to $B_1$. Both bins will be covered in the result and each contain an item of size at least $2/3$.
\end{proof}

\begin{lemma}\label{optcover}
If we remove a multiset of total size $x$ from an instance of bin covering, then the number of covered bins in the optimal packing reduces by at least $x/2$.
\end{lemma}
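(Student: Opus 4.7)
The plan is to establish the equivalent inequality $OPT(L) \geq OPT(L \setminus M) + x/2$, where $L$ denotes the original instance of bin covering and $M$ the removed multiset (of total size $x$); rearranged, this is exactly the claim that removing $M$ drops the optimum by at least $x/2$. The strategy is constructive: I would start with an optimal packing of the reduced instance and augment it by packing the removed items into fresh bins.

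I would first fix an optimal packing $P'$ of $L \setminus M$, which covers exactly $OPT(L \setminus M)$ bins using only items of $L \setminus M$. Because no item of $M$ appears in $P'$, the items of $M$ remain unused and can be placed into brand-new bins without disturbing the existing coverage.

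The key technical step is to show that the items of $M$, of total mass $x$ and each of size at most $1$, can themselves cover at least $x/2$ bins. I would use a simple greedy procedure: process the items of $M$ in arbitrary order, adding each to the current open bin, and as soon as the open bin reaches mass $1$, close it and start a new bin. Each closed bin has mass strictly less than $2$, since just before its last item was inserted the bin had mass less than $1$ and the inserted item has size at most $1$. Consequently, if $k$ bins are closed and the residual open bin has mass $r < 1$, then $x \leq 2k + r < 2k + 1$, giving $k > (x-1)/2$. Concatenating these $k$ new bins with the $OPT(L \setminus M)$ bins of $P'$ yields a feasible packing of $L$ covering at least $OPT(L \setminus M) + x/2$ bins, which is the desired inequality.

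The main obstacle I expect is the additive discrepancy of at most one bin between the greedy lower bound $k \geq \lceil (x-1)/2 \rceil$ and the clean $\lceil x/2 \rceil$ implicit in the lemma statement. I would absorb this constant slack into the ``sufficiently large $OPT$'' hypothesis already imposed by Lemma~\ref{Jensen} on the surrounding FPTAS, under which an $O(1)$ offset does not affect the conclusion of the approximation scheme.
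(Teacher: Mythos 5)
Your proof is correct and takes essentially the same route as the paper: the paper also augments an optimal cover of the reduced instance by partitioning the removed items (each of size at most $1$) into groups of total size in $[1,2)$, each of which covers a fresh bin without disturbing the others. Your explicit greedy grouping, and your acknowledgement of the $O(1)$ rounding slack between $\lceil (x-1)/2\rceil$ and $x/2$, are if anything slightly more careful than the paper's wording, which asserts a partition into ``$x/2$'' such multisets outright.
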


\begin{proof}
If we remove a multiset of items with total size at least 1, then the number of covered bins decreases by at least 1. Otherwise, if removing a set of items with total size at least 1 does not reduce the number of covered bins, then these items can cover a new bin without impacting coverage of other bins. This contradicts the optimality assumption for the covering.
Given as multiset of total size $x$, we can partition it into $x/2$ multisets of total size in $[1,2)$ (this is possible because all items have size at most 1). Repeating the above argument $x/2$ times completes the proof.
\end{proof}

Consider an instance $I$ of the graph burning problem formed by $b$ paths $P_1, \ldots, P_b$ of lengths $n_1,n_2, \ldots, n_b$ such that $n_i \leq n_{i+1}$. 
Let $m_i = \lceil (n_i+1)/2\rceil$ and $C = 3 m_b$.
 We define the \emph{$k$-instance of bin covering associated with $I$} as an instance of bin covering formed by $b$ `large' items $\{p_1, \ldots, p_b\}$, where $p_i$ has size $1-m_i/C$ for $1\leq i \leq b$. We also define $k$ `small' items $\{q_1, \ldots, q_k\}$, where $q_j$ has size $\min\{j/C,1/3\}$ for $1\leq j \leq k$. 
Note that all large items have size at least $2/3$ and small items have size at most $1/3$.
Also note that large items appear in same way for any value of $k$ in the $k$-instances of the bin covering problem.
Figure \ref{fig:red} illustrates this construction.
\blue{Since paths are regular, 
the size large items is upper-bounded by a constant $c^*=1-m_1/(3m_b)$ which we refer to as the \emph{canonical constant} of the graph burning instance.}
\blue{Intuitively, burning the $b$ disjoint paths is translated to covering $b$ bins.
By Lemma \ref{twobins}, the $b$ large items can be placed in distinct bins without changing the number of covered bins. 
The remaining space of bins (to be covered) translates to paths of different length that should be burned. Small items are associated with the radii of the fires started at different rounds. These intuitions are formalized in the following two lemmas:  }

\begin{lemma}\label{SSS}
Given a solution for the $k$-instance of bin covering that covers at least $b$ bins, one can find, in polynomial time, a burning scheme that completes in at most $k$ rounds.
\end{lemma}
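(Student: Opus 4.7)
The plan is to translate the bin covering into a burning schedule in three stages: normalize the covering so each of the $b$ large items sits alone in a covered bin; map the small items in each bin to fires on the paired path; and verify those fires burn the path within $k$ rounds.

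For the normalization, I apply Lemma~\ref{twobins} together with a symmetric item-swap. Large items have size at least $2/3$, so a covered bin holds at most two of them. Whenever a covered bin has two large items, pair it with a covered small-only bin (which, by a simple counting argument, must then exist) and invoke Lemma~\ref{twobins} to split the two large items into separate covered bins. An analogous rearrangement then moves any large item still sitting in an uncovered bin into a covered small-only bin while keeping both covered. At the end every $p_i$ occupies its own covered bin; discarding the remaining small-only covered bins leaves exactly $b$ covered bins. Writing $S_i\subseteq\{1,\ldots,k\}$ for the small-item indices accompanying $p_i$, the bin-covering condition becomes $\sum_{j\in S_i}\min\{j,m_b\}\ge m_i$.

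For the construction, pair the bin containing $p_i$ with path $P_i$ and, for each $j\in S_i$, start a fire on $P_i$ at round $k-j$, so that by the end of round $k$ it has spread to radius $j$ and covers a ball of $2j+1$ vertices of $P_i$; the boundary case $j=k$ is handled by a fire of maximum feasible radius, which alone covers any path. Because each small item belongs to at most one bin, these starting rounds are pairwise distinct, and at most $k$ fires are started overall, so the schedule finishes by round $k$.

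The main obstacle is verifying that the fires assigned to each $P_i$ cover every vertex. If some $j\in S_i$ satisfies $j\ge m_b$, the corresponding single fire has radius at least $m_b$ and a centered copy burns $P_i$ entirely, since $n_i\le n_b\le 2m_b-1$. Otherwise every $j\in S_i$ is strictly below $m_b$, so $\sum_{j\in S_i}j\ge m_i$, and the cumulative coverage $\sum_{j\in S_i}(2j+1)=2\sum_{j\in S_i}j+|S_i|\ge 2m_i+|S_i|\ge n_i+1$ suffices to partition $P_i$ into consecutive sub-paths, each covered by its centered fire. The delicate point is aligning the item-size cap $1/3$ and the choice $C=3m_b$ with the geometric fact that no single fire usefully covers more than the longest path; once this alignment is pinned down, the inequality $\sum_{j\in S_i}\min\{j,m_b\}\ge m_i=\lceil(n_i+1)/2\rceil$ translates precisely into the coverage budget needed to burn $P_i$, and the resulting schedule is a valid burning of the input graph in at most $k$ rounds.
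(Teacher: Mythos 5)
Your proposal follows essentially the same route as the paper's proof: normalize the covering with Lemma~\ref{twobins} so that each large item $p_i$ sits alone in a covered bin, then convert each small item $q_j\in S_i$ into a fire on $P_i$ started at round $k-j$, and verify coverage via $\sum_{j\in S_i}\min\{j,m_b\}\ge m_i$ and $2m_i+|S_i|\ge n_i+1$ — the paper states the same argument, only more tersely. The one caveat is your $j=k$ boundary patch (``a maximum-feasible-radius fire alone covers any path'' needs $k\ge m_b$), but this off-by-one, which the paper silently ignores as well, is harmless since the slack $2m_i+|S_i|\ge n_i+1+|S_i|$ absorbs the two vertices lost by starting that fire at round $1$ instead of round $0$.
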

\begin{proof}
Given the solution for bin covering, we apply Lemma \ref{twobins} to ensure that there are $b$ bins that each include exactly one large item (this is possible because large items have size at least 2/3 and small items have size at most 1/3). Call the resulting bins $B_1, \ldots, B_b$, where $B_i$ is the bin that includes the large item $p_i$. Let $S_i$ be the set of small items in $B_i$. We associate items in $S_i$ with activators in a burning schedule. Assume initially all vertices are unmarked. We process small items in the solution for bin covering in the following manner. If $q_j$ $(1 \leq j \leq k)$ appears in set $S_i$ in the covering solution, then at time $k - j$ we start a fire at distance $j$ of the left-most marked node in path $P_i$ and mark any node at distance $j$ of it. In this wya, by the end of round $k$ all marked nodes will be burned. Since the total size of items in $S_i$ is at leas $m_i/C$, the number of marked vertices by the time $k$ would be $2m_i+1 \geq n_i$; that is, all vertices will be burned by the end of round $k$.
\end{proof}

\begin{figure}
\centering
\includegraphics[scale=.65,trim={0cm 33cm 0cm 0cm}, clip, scale=.68]{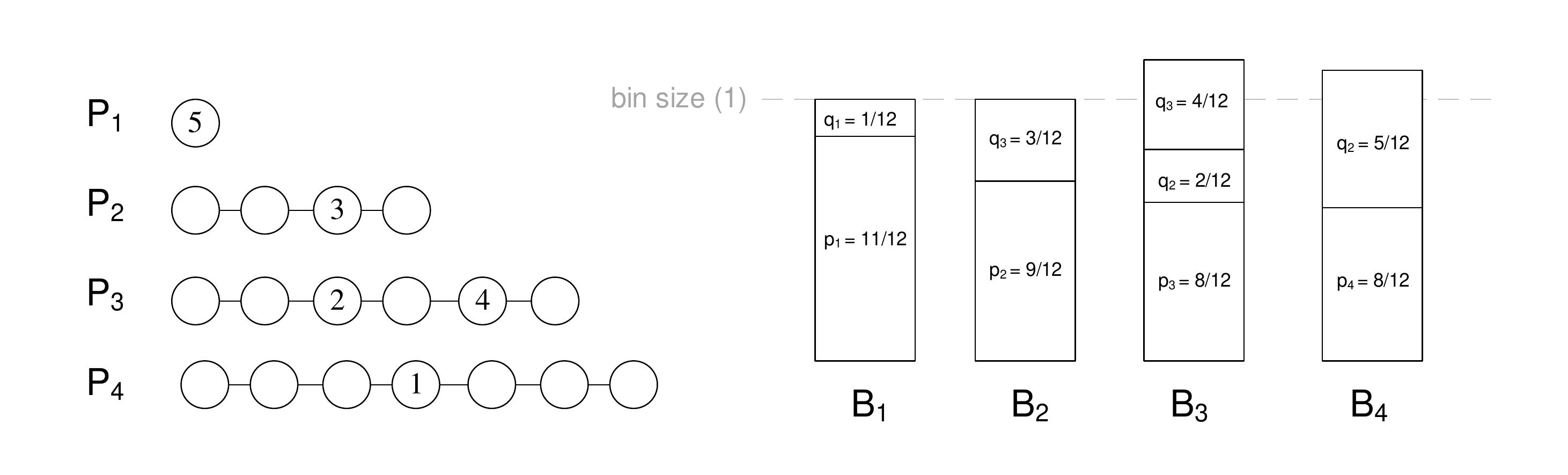}
\caption{A burning scheme for an instance of the burning problem on disjoint paths (left) and the equivalent covering for the $5$-instance of the covering problem (right). Here, we have $m_1=1, m_2 =3, m_3=4$, and $m_4=4$. }\label{fig:red}
\end{figure}

\begin{lemma}\label{TTT}
Given a burning scheme that completes within $k-1$ rounds, it is possible to create a solution for the $k$-instance of bin covering, in polynomial time, so that at least $b$ bins are covered in the solution.
\end{lemma}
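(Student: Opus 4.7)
The plan is to construct the covering explicitly from the given burning scheme. Since the scheme completes within $k-1$ rounds, it lights at most $k-1$ fires, one per round. For each fire $f$ I record its start round $r_f \in \{1, \ldots, k-1\}$ and define its \emph{effective radius} $\rho_f := k-1-r_f \in \{0,1,\ldots,k-2\}$; these radii are pairwise distinct across fires. Because the paths are disjoint, each path $P_i$ must be burned by a nonempty set $F_i$ of fires started on it, and a fire of radius $\rho$ covers at most $2\rho + 1$ vertices of $P_i$, so $\sum_{f \in F_i}(2\rho_f + 1) \geq n_i$.

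The proposed covering is the natural one: for each $i \in \{1,\ldots,b\}$, place the large item $p_i$ into bin $B_i$ together with the small items $\{q_{\rho_f + 1} : f \in F_i\}$. Distinctness of the $\rho_f$'s ensures that no small item is placed into two bins, and since $\rho_f + 1 \in \{1, \ldots, k-1\}$, every invoked small item genuinely belongs to the $k$-instance (the item $q_k$ is simply left unused). Because $p_i$ already contributes $1 - m_i/C$, it suffices to show that the small items in $B_i$ contribute at least $m_i/C$; if so, all $b$ bins $B_1,\ldots,B_b$ are covered. I would split into two cases. If some fire $f^\star \in F_i$ satisfies $\rho_{f^\star} + 1 > m_b$, then $q_{\rho_{f^\star}+1}$ alone has size $1/3 = m_b/C \geq m_i/C$, so $B_i$ is covered. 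Otherwise every small item in $B_i$ has size exactly $(\rho_f + 1)/C$, and the small-item contribution equals $\bigl(\sum_{f \in F_i}(\rho_f + 1)\bigr)/C$. Rewriting the burning inequality gives $\sum_f(\rho_f + 1) \geq (n_i + |F_i|)/2 \geq (n_i+1)/2$, and since this sum is an integer it is at least $\lceil(n_i+1)/2\rceil = m_i$, covering the bin.

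The main obstacle I anticipate is aligning the item sizes of the $k$-instance with the geometry of a $(k-1)$-round burning scheme. In particular, the shift from $\rho_f$ to $\rho_f + 1$ in the item index is what lets the integrality of $\sum_f(\rho_f + 1)$ absorb the ceiling in $m_i = \lceil(n_i+1)/2\rceil$, and treating the capped-size regime $\rho_f + 1 > m_b$ separately avoids any loss from the $\min\{\cdot,\,1/3\}$ in the item definition. The construction is manifestly polynomial-time since at most $k-1 \leq n$ fires need to be processed.
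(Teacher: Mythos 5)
Your construction is essentially the paper's own: place each large item $p_i$ in its own bin and add, for each fire started on $P_i$ at round $r$, the small item indexed $k-r$ (your $\rho_f+1$, the paper's $y_j$), splitting into the capped case where some item already has size $1/3$ and the uncapped case where the sizes sum to at least $m_i/C$. The only difference is how the key inequality $\sum_{f\in F_i}(\rho_f+1)\geq m_i$ is obtained — you count vertices, use $|F_i|\geq 1$, and invoke integrality to absorb the ceiling, whereas the paper counts burned edges on the path extended by two vertices — and both derivations are valid.
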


\begin{proof}
We say an edge is burned if its both endpoints are burned. Since the burning scheme completes in at most $k-1$ rounds, we can burn all edges within $k$ rounds even if the lengths of all paths is increased by 2. 

Consider a path $P_i$ of length $n_i$. Assume the burning schedule starts fires at rounds     $k-y_1, k-y_2, \ldots, k-y_t$ in $P_i$. Note that a fire started at round $x$ burns at most $2(k-x)$ edges within $k$ rounds. Hence, if $Y$ denotes the total sum of $y_j$'s, then at most $2Y$ edges are burned by round $k$. Since all edges can be burned within $k$ rounds even in a longer path of $n_i+2$ vertices, we have that $2Y \geq n_i + 1$; that is $Y \geq m_i$.

We create a solution for the covering problem as follows. Place the large items in separate bins, and let $B_i$ be the large bin at which $p_i$ is placed. Recall that fires  at the path $P_i$ are started at rounds $k-y_1, \ldots, k-y_t$.
Consider the set $Q_i= \{\min\{y_1/C,1/3\}, \ldots, \min\{y_t/C,1/3\}\}$, which is a subset of small items in the covering instance. 
We place items in $Q_i$ in the bin that contains large item $p_i$. Next, we show the total size of items in the bin $B_i$ is at least 1. First, note that if any item in $Q_i$ has size 1/3, since $p_i$ has size at least 2/3, the total size of these two items will be 1 and we are done. Next, assume all items in $S$ are smaller than 1/3; that is, $Q_i = \{ y_1/C, \ldots, y_t/C\}$. The total size of items in $Q_i$ is equal to $Y/C$ which is at least $m_i/C$. So, the total size of items in the bin will be at least $m_i/C$ (for small items) plus $1-m_i/C$ (for the large item $b_i$) which sums to at least 1. In summary, for any $i\leq k$, if we can burn edges in path $P_i$ within $k$ rounds, we can cover the bin $B_i$ with small items associated with the rounds at which $P_i$ is burned. \end{proof}


We repeatedly apply the FPTAS of Lemma \ref{Jensen} (with a carefully chosen value of $\epsilon_0$) to find the smallest $k$ such that, for the $k$-instance of bin covering, the FPTAS returns a solution that covers at least $b$ bins.  By Lemma~\ref{SSS}, such solution can be converted to a burning scheme. Using Lemmas \ref{TTT},\ref{optcover}, we can show that this solution runs achieves approximation ratio of $1+\epsilon$ while running in time polynomial in both $n$ and $1/\epsilon$. More formally, we can prove the following theorem:

\begin{theorem}
Given a graph of size $n$ formed by a forest of $b = \omega(1)$ regular disjoint paths and a positive value $\epsilon$, there is an algorithm that generates a burning scheme that completes within a factor 
$1+\epsilon$
of an optimal scheme. The time complexity of the algorithm is polynomial in both $n$ and $1/\epsilon$.
\end{theorem}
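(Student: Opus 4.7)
The plan is to drive the FPTAS from Lemma~\ref{Jensen} over the bin-covering instances constructed by the reduction, and pair it with the structural lemmas already established. Concretely, I would choose an FPTAS parameter $\epsilon_0$ (to be fixed later as a function of $\epsilon$ and the canonical constant $c^*$), then run the FPTAS of Lemma~\ref{Jensen} on the $k$-instance of bin covering associated with the input, for $k = 1, 2, \ldots, n$, and output the schedule obtained from the smallest $\hat{k}$ for which the FPTAS returns a covering of at least $b$ bins. Lemma~\ref{SSS} converts this covering, in polynomial time, into a burning scheme that finishes in at most $\hat{k}$ rounds, so correctness of the output is immediate. The running time is polynomial in $n$ and $1/\epsilon_0$: the FPTAS is called $O(n)$ times, each invocation is polynomial in the instance size and $1/\epsilon_0$, and the $k$-instance has $b + k = O(n)$ items.

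The heart of the argument is the approximation analysis, which controls $\hat{k}$ in terms of the optimal burning time $k^*$. By Lemma~\ref{TTT}, the optimal covering of the $(k^*+1)$-instance covers at least $b$ bins. For $k \geq k^* + 1$, the $k$-instance is obtained from the $(k^*+1)$-instance by \emph{adding} the small items $q_{k^*+2}, \ldots, q_k$; by Lemma~\ref{optcover} applied in reverse (viewing the smaller instance as the larger one with these items removed), the optimal covering of the $k$-instance covers at least $b + X/2$ bins, where $X$ is the total size of the added items. I would then invoke the regularity condition: because $m_1/m_b \to 1$, the sizes $q_j = \min\{j/C, 1/3\}$ for $j$ in the window $(k^*, (1+\epsilon)k^*]$ sum to $\Omega(\epsilon \cdot k^*)$ times a factor bounded away from zero in terms of $c^* = 1 - m_1/(3m_b)$, and moreover the relationship $k^* = \Omega(\max\{b, \sqrt{b \, n_b}\})$ forces $X/2 \geq \epsilon_0 \, b/(1-\epsilon_0)$ once $\epsilon_0$ is chosen small enough compared to $\epsilon(1-c^*)$. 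With this lower bound in hand, the FPTAS on the $k$-instance produces a covering of at least $(1-\epsilon_0)(b + X/2) \geq b$ bins, so $\hat{k} \leq \lceil (1+\epsilon)k^* \rceil$.

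Combining the two pieces yields $\hat{k} \leq (1+\epsilon)k^*$ as required, and the running time bound follows directly from the running time of the FPTAS of Lemma~\ref{Jensen} together with the $O(n)$ invocations. The hypothesis $b = \omega(1)$ is used precisely to satisfy the ``$\mathrm{OPT}(L)$ sufficiently large'' caveat in Lemma~\ref{Jensen}, since $B_{\mathrm{OPT}}(k) \geq b \to \infty$.

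The main obstacle, and where I expect the delicate work to lie, is the quantitative calibration in the middle paragraph: the inequality $X/2 \geq \epsilon_0 \, b/(1-\epsilon_0)$ has to be made to hold for every valid regular instance, and it is here that the regularity condition is genuinely needed. Without regularity, the small items $q_j$ with $j$ near $k^*$ can be as small as $j/C = \Theta(k^*/m_b)$, which is tiny when a few paths are much longer than the rest, so $X$ may be too small to dominate $\epsilon_0 b$; this is exactly why a different approach is needed for the non-regular case and justifies treating it separately as a PTAS rather than an FPTAS. The rest of the proof is routine once $\epsilon_0$ is tied to $\epsilon$ and the canonical constant.
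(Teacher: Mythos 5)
Your proposal is correct and uses the same reduction and the same toolkit (Lemmas~\ref{Jensen}, \ref{SSS}, \ref{TTT}, \ref{optcover}, and the canonical constant $c^*$), but it runs the approximation analysis in the opposite direction from the paper. The paper fixes $\epsilon_0$ explicitly, lets $k^*$ be the smallest guess where the FPTAS covers $b$ bins, and then lower-bounds the optimal \emph{burning} time: the FPTAS failure at $k^*-1$ caps the optimal covering of that instance at $b/(1-\epsilon_0)$, the $\epsilon_2 k'$ largest small items (``critical items'') are removed to pass to an $\alpha$-instance, Lemma~\ref{optcover} shows $b$ bins cannot be covered there, and the contrapositive of Lemma~\ref{TTT} turns this into $\opt \geq \alpha-1 \approx (1-\epsilon_2)k^*$. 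You instead start from the true optimum $k^*_{\mathrm{OPT}}$, use Lemma~\ref{TTT} forward to get an optimal covering of $b$ bins in the $(k^*_{\mathrm{OPT}}+1)$-instance, use Lemma~\ref{optcover} in the additive direction (adding the window items $q_{k^*_{\mathrm{OPT}}+2},\ldots,q_k$ of total size $X$ raises the optimal covering to at least $b+X/2$), and then invoke the FPTAS guarantee to force success at $k\approx(1+\epsilon)k^*_{\mathrm{OPT}}$, bounding the algorithm's $\hat{k}$ from above. The two arguments are mirror images and both reduce to the same quantitative fact you flag as the delicate point: a window of $\Theta(\epsilon k)$ of the largest small items has total size $\Omega(\epsilon(1-c^*)b)$, which follows from $k^2/(bC)\gtrsim 2(1-c^*)$ (since $k^2$ is at least the number of vertices and $C=3m_b$, using regularity, plus $k\geq b$ when the cap $1/3$ kicks in); so your deferred calibration does go through with $\epsilon_0=\Theta(\epsilon(1-c^*))$, essentially the same choice as the paper's $\epsilon_0 = \frac{(1-c^*)\epsilon}{4+(5-c^*)\epsilon}$. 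Your direction is arguably slightly cleaner, since it avoids the critical-item/$\alpha$-instance bookkeeping and uses the FPTAS guarantee only in the forward direction, at the cost of leaving the explicit constant unspecified; the $b=\omega(1)$ hypothesis plays the same role in both (it discharges the ``$OPT(L)$ sufficiently large'' caveat and absorbs rounding slack, since $k^*_{\mathrm{OPT}}\geq b$).
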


\begin{proof}
Define $\epsilon_0 = \frac{(1-c^*)\epsilon}{4+(5-c^*)\epsilon}$ (recall that $c^*$ is the canonical constant of the regular instance of the burning problem).
Find the smallest $k$ such that for the $k$-instance of bin covering, the FPTAS of Lemma \ref{Jensen} \blue{with parameter $\epsilon_0$} returns a solution that covers at least $b$ bins. Let that value of $k$ be $k^*$. By Lemma~\ref{SSS}, that solution can be converted, in polynomial time, to a burning scheme that completes in $k^*$ rounds. 
Note that the total size of small items in the $k^*$-instance is $k^*(k^*+1)/(2C)$, and the total size of large items is at most $bc^*$. 
Since $b$ bins are covered, we conclude that $k^*(k^*+1)/(2C) \geq b- bc^*$; that is $\frac{bC}{k^*(k^*+1)}\leq \frac{1}{2(1-c^*)}$.
This implies that for large values of $k$ we have $\frac{bc}{k'^2} <\frac{1}{1-c^*}$ where $k' = k^*-1$ (we refer to this fact later).


Next, we provide a lower bound for the cost of \opt.
Since $k^*$ is smallest value for which the FPTAS failed to cover $b$ bins in the $k'$-instance of bin covering, by Lemma~\ref{Jensen}, an optimal covering algorithm \opt cannot cover more than $b/(1-\epsilon_0)$ bins in the $k'$-instance. Let $\epsilon_1 = \epsilon_0/(1-\epsilon_0)$ . So, \opt cannot cover more than $b (1+\epsilon_1)$ bins in the $k'$-instance.
Let $\alpha = (1-\epsilon_2)k'$, where $\epsilon_2 = \frac{4}{1-c^*}\epsilon_1$. We claim that \opt cannot cover $b$ bins in the $\alpha$-instance of the bin covering problem. If this claim is true, then there is no burning scheme that completes within $\alpha-1$ rounds; otherwise, by Lemma \ref{TTT} that burning scheme yields to a covering solution that covers $b$ bins of the $\alpha$-instance of the bin covering problem. In summary, we will have a burning scheme that completes in $k^*$ rounds while an optimal burning algorithm requires $\alpha-1$ rounds to burn the graph. This gives an approximate ratio of $k^* / (\alpha-1)$ which approaches to $\frac{1}{1-\epsilon_2}$ 
$= 1+ \frac{4\epsilon_0}{(1-\epsilon_0)(1-c^*)-4\epsilon_0}=1+\epsilon$
for large values of $k^*$. Note that, since $k^*$ is lower-bounded by the number of paths, we have $k^*\in \omega(1)$.

It remains to show that an optimal covering algorithm cannot cover $b$ bins in the $\alpha$-instance of bin covering. Note that the $\alpha$-instance is similar to the $k'$-instance except that, among the small items, the $\epsilon_2 k'$ largest items are missing.
Call these items \emph{critical items}. We claim that the total size of critical items, denoted by $X$, is at least $2\epsilon_1 b$. 
For now assume it is true; 
by Lemma \ref{optcover}, removing items with total size at least $X$ decreases the number of covered bins in an optimal solution of the $k'$-instance by at least $X/2$. Thus, if it is possible to cover $b$ bins in the $\alpha$-instance 
then it is possible to cover at least $b+ \epsilon_1 b $ bins in the $k'$-instance, which we know is not possible. We conclude that we cannot cover $b$ bins in the $\alpha$-instance. We are just left to show $X > 2\epsilon_1 b$. We have 
$X = k'^2 (2\epsilon_2-\epsilon_2^2)/2C+k'\epsilon_2/2C > k'^2 \epsilon_2/2C $.
Therefore, it suffices to have $k'^2 \epsilon_2/2C > 2\epsilon_1 b$; that is, $\epsilon_2>\frac{4 b c}{k'^2} \epsilon_1$. We previously observed that $\frac{bc}{k'^2} < \frac{1}{1-c^*}$. So, the inequality holds as long as  $\epsilon_2 \geq \frac{4}{1-c^*} \epsilon_1$.
\end{proof}

\darkblue{
\subsubsection*{PTAS for the General Case of Non-Constant Disjoint Paths}
In this section, we use a direct approach to provide a PTAS for graphs formed by non-constant number of disjoint paths. Unlike previous section, we do not make any assumption on the length of the paths, in particular, the length of paths can be asymptotically larger than the number of paths. We note that when \blue{regularity condition holds}, the result of the previous section is stronger as the provided algorithm is \textit{fully} polynomial. 

Assume the graph is formed by $b$ disjoint paths, each of length at most $n$. An instance of the decision variant of the burning problems has a parameter $g$ and asks whether it is possible to burn the graph with fires started at times $1,2 \ldots, g$.
We define the \emph{radius} of a fire started at round $t$ as $g-t+1$; so an instance $I(g)$ of the decision problem asks whether it is possible to burn the graph with fires of radii $1,2, \ldots, g$. Given a constant integer $k$, we form at most $k+1$ groups of fires, each containing fires of close radii such that the difference in the radii of any two fire in a group is at most $\beta = \lfloor g/k \rfloor$ (there will be $\beta$ fires in each group, except potentially the last one). Based on this grouping, we define two new instances of the decision problem: in the \emph{weak instance} $I'(g,k)$, the fires of the first group (with smallest radii) are removed and the radii of fires of other groups is rounded to the smallest radius in the group. In the \emph{strong instance} $I''(g,k)$ the radii are rounded up to the largest radius in the group. Note that in both weak and strong instances, there are $k+1$ radius sizes, and each fire has radius at least $\lfloor g/k \rfloor$ + 1. Also, note that if we remove the $\beta$ fires of largest radii from the strong instance $I''(g,k)$, the result will be the weak instance $I'(g,k)$.
We prove the following lemma, which will be later applied on the weak instances of the problem.

\begin{lemma}\label{lem:ptas}
Consider an instance of the burning problem on disjoint paths in which there are $g$ fires each having a radius among $k+1$ possible radii for some constant $k$ so that each radius is in the range $(\lfloor g/k \rfloor, g]$. There is an algorithm that answers the problem with the following guarantees. If the answer is `yes', then it is possible to burn the graph with the fires in the instances. If the answer is `no', then there is a number $p \in o(g)$ so that it is not possible to burn the graph 
when the $p$ fires of largest radii are removed.
\end{lemma}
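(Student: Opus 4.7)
The plan is to prove the lemma via an exact, polynomial-time dynamic programming algorithm; since the second guarantee is vacuous when the instance is strictly infeasible (taking $p=0 \in o(g)$), an exact decision procedure is sufficient to satisfy both guarantees simultaneously. The key enabler is that with only $k+1$ distinct radii, any assignment of fires to a single path is described by a constant-dimensional vector of counts, so the state space of the DP stays polynomial.

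I would first establish a structural fact about a single path: a multiset of fires with radii $\rho^{(1)} \geq \cdots \geq \rho^{(t)}$ can burn a path of length $\ell$ if and only if $\sum_{s}(2\rho^{(s)}-1) \geq \ell$. Necessity is immediate because each fire burns at most $2\rho^{(s)}-1$ vertices. For sufficiency, I would place the fires in decreasing-radius order starting from one endpoint at positions $p_s = \sum_{i<s}(2\rho^{(i)}-1) + \rho^{(s)}$, so that their balls tile consecutive disjoint intervals. The delicate point is timing: when $f_s$ activates at round $g-\rho^{(s)}+1$, each earlier fire $f_i$ has spread only $\rho^{(i)}-\rho^{(s)}$ steps from $p_i$, and the identity $p_s - (p_i + \rho^{(i)} - \rho^{(s)}) \geq 2\rho^{(s)}-1 \geq 1$ shows that $p_s$ is still unburnt, hence a valid activator location.

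With this in hand, I would set up the DP. Let $\rho_1 > \cdots > \rho_{k+1}$ denote the distinct radii with multiplicities $N_1, \ldots, N_{k+1}$ (so $\sum_j N_j = g$), and let $T[i][u_1]\cdots[u_{k+1}]$ be a boolean value indicating whether paths $P_1, \ldots, P_i$ can be jointly covered using exactly $u_j$ fires of radius $\rho_j$ for each $j$. The base case is $T[0][0]\cdots[0] = \text{True}$, and the transition marks $T[i+1][u_1+c_1]\cdots[u_{k+1}+c_{k+1}]$ True whenever $T[i][u_1]\cdots[u_{k+1}]$ is True and the nonnegative vector $(c_1,\ldots,c_{k+1})$ satisfies the coverage condition $\sum_j c_j(2\rho_j-1) \geq n_{i+1}$ from the structural fact. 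The algorithm outputs ``yes'' if $T[b][N_1]\cdots[N_{k+1}]$ is True, recovering a burning schedule by traceback together with the greedy placement per path, and ``no'' otherwise.

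For complexity and correctness, the table has $O(b \cdot g^{k+1})$ entries and each transition examines $O(g^{k+1})$ candidate vectors, yielding a total running time of $O(b \cdot g^{2(k+1)})$, which is polynomial in the input size because $k$ is constant. On ``yes'', the traceback combined with the greedy placement yields a valid burning schedule by the sufficiency part of the structural fact; on ``no'', the DP has exhaustively certified that no assignment works, so the second guarantee is witnessed by $p=0$, which is $o(g)$. The principal technical obstacle is the timing argument inside the structural fact, which requires tracking how far each earlier fire has spread at the moment a later fire is lit; once this is settled, the remainder is a routine DP whose polynomial runtime hinges on the bounded number of distinct radii.
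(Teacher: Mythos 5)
Your proof is correct, but it takes a genuinely different route from the paper. The paper's algorithm splits the paths into short ones (length $O(g)$) and long ones (length $\omega(g)$), exhaustively enumerates all fire schedules for the short paths (this is where the hypothesis that every radius exceeds $\lfloor g/k\rfloor$ is needed, to bound the number of fires per short path by a constant), and then fills the long paths greedily; the greedy step is what forces the weaker ``no'' guarantee, with $p$ equal to the number of long paths, which is $o(g)$. You instead observe that, since the paths are disjoint, feasibility decomposes path by path, and that a multiset of fires can burn a single path of $\ell$ vertices if and only if $\sum_s(2\rho^{(s)}-1)\ge \ell$ (your tiling-plus-timing argument for sufficiency is sound; the only cosmetic point is that when the sum strictly exceeds $\ell$ the last prescribed positions fall off the path, but those fires are simply redundant, which is harmless under the coverage semantics the lemma and the paper both use). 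With that characterization, a dynamic program over the count vectors of the $k+1$ radius classes decides feasibility \emph{exactly} in time $O(b\,g^{2(k+1)})$, polynomial for constant $k$, so the second guarantee holds trivially with $p=0\in o(g)$; you never need the lower bound on the radii or the short/long dichotomy. Your result is in fact stronger than the paper's lemma: an exact decision procedure would also simplify the downstream theorem, since the $o(g)$ slack terms in its lower bound on the optimal burning time would disappear, while the overall running time remains of PTAS type (the exponent depends on $k$, just as the paper's enumeration does).
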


\begin{proof}
Assume there are $n$ vertices in the graph. We divide the paths in the graph into short paths with length $O(g)$ and long paths with length $\omega(g)$. If the number of long paths is $\Omega(g)$, the algorithms sends no: there are $\omega(g^2)$ vertices in the graph while the maximum number of vertices that can be burned with the instance is $O(g^2)$. Next, assume the number of long paths is $p$ where $p \in o(g)$. Also, assume the number of paths is at most $g$; otherwise, the algorithm returns `no' as there is no way to burn more than $g$ disjoint paths with $g$ fires. In order to achieve the desired guarantees, we exhaustively check all possible burning schemes for short paths and use a simple strategy to burn long paths.
Assume all short paths have length at most $\alpha g$ for some constant $\alpha$. Since all fires have radius more than $g/k$, it suffices to use at most $\lceil \alpha k/2 \rceil$ fires to burn each path. So, for each path, we have at most $\lceil \alpha k/2 \rceil$ fires each having one of the $k$ possible radii. There are $\tau = \binom{\lceil \alpha k/2 \rceil + k}{k}$ ways to assign fires to each path; define each such assignment a `fire schedule' for a path. Note that $\tau$ is a constant. There are at most $g$ short paths each taking one of the possible $\tau$ fire schedules. So, there are $\binom{g+\tau}{\tau}$ ways to assign fire schedules to these paths; this value is polynomial in $g$ as $\tau$ is constant. We conclude that there is a polynomial number of possible burning schedules for short paths. For each such schedule for the short paths, we complete the burning by using the fires absent in the schedule to burn long paths. We process these fires in an arbitrary order and assign them one by one to the long paths. A fire of radius $r$ burns up to $2r-1$ vertices. When this fire is assigned to a path, $2r-1$ vertices in the path are declared `burned' and the process continues until all vertices in the path are burned, after which the fires are assigned to burn the next path. This process continues until all paths are burned, in which case the algorithm returns `yes'. If we run out of fires and not all paths are burned, the burning schedule for the short paths is not useful and the process continues by checking the next schedule for short paths. If all schedules for short paths are checked and for all of them we fail to burn long paths with the remaining fires, the algorithm returns `no'.

Next, we show the algorithm provides the desired guarantees. First, if the algorithm returns `yes', then there has been a schedule to burn short paths and the remaining fires have successfully burned the long paths. Hence, there is a schedule for fires in the instance that burns the whole graph.
Next, assume the algorithm returns `no'; we claim no algorithm can burn the graph using the same fires when the $p$ fires with the largest radii are removed (recall that $p \in o(g)$ is the number of long paths). Consider otherwise, that is, assume it is possible to burn the graphs with the mentioned fires. The burning schedule for assigning fires to short paths in such solution $S$ is checked also by the algorithm. The difference is that the algorithm assigns fires to long paths differently from $S$. Since the algorithm returns `no', it fails to cover all paths with fires. Hence, if we remove the last fire assigned to each path by the algorithm, the number of vertices that can be burned by the remaining fires will be less than the total size of long paths. Consequently, if we remove the $p$ fires with the largest radii, the remaining fires do not suffice to burn the long paths. In summary, if we assign fires to short paths in the same way that $S$ does and remove the largest $p$ fires from the rest of fires, the remaining fires cannot burn long paths. This contradicts our assumption that $S$ can burn all graphs with the same fires.
\end{proof}

\begin{theorem}
Given a graph of size $n$ formed by a forest of $b = \omega(1)$ disjoint paths and a positive value $\epsilon$, there is an algorithm that generates a burning scheme that completes within a factor $1+ \epsilon$ of an optimal scheme. The time complexity of the algorithm is polynomial in $n$.
\end{theorem}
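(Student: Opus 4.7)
The plan is to identify the smallest deadline $g^*$ for which Lemma~\ref{lem:ptas} certifies the weak instance $I'(g^*,k)$ as solvable, and to argue that $g^*\le(1+\epsilon)\,\opt$. I would fix $k=\Theta(1/\epsilon)$, say $k=\lceil 2/\epsilon\rceil+2$, and for $g=1,2,\ldots,n$ invoke the algorithm of Lemma~\ref{lem:ptas} on $I'(g,k)$; this is legal because $I'(g,k)$ has at most $k+1$ distinct radii, all strictly greater than $\lfloor g/k\rfloor$. Let $g^*$ be the smallest value of $g$ at which the lemma returns `yes'.

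For the upper bound, a `yes' on $I'(g^*,k)$ furnishes a placement of the weak fires that burns every vertex within its assigned weak radius. Writing $\beta=\lfloor g^*/k\rfloor$, I would convert this placement into an actual burning schedule of length $g^*$ by matching each weak fire of level $j$ (radius $(j-1)\beta+1$) to a distinct actual round whose radius lies in $[(j-1)\beta+1,\,j\beta]$. In each such round the actual radius is at least the weak radius, so every ball burnt under the weak radii is still burnt under the actual radii; the remaining actual rounds, corresponding to the group-$1$ radii $1,\ldots,\beta$, can then be spent activating any still-unburnt vertices without harm. The algorithm therefore outputs a valid schedule of length $g^*$.

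For the lower bound, a `no' on $I'(g^*-1,k)$ produces $p\in o(g^*)$ such that $I'(g^*-1,k)$ with its $p$ largest-radius fires removed fails to burn the graph. Writing $\beta'=\lfloor(g^*-1)/k\rfloor$, I would show $\opt>(k-2)\beta'$ by contradiction: given an optimal schedule for an actual instance $I(g_0)$ with $g_0\le(k-2)\beta'$, I would map each actual fire of radius $r\in[(i-1)\beta'+1,\,i\beta']$ (actual group $i\le k-2$) to a weak fire of radius $i\beta'+1$ at the same vertex, which is a level-$(i+1)$ weak fire; since the weak radius strictly exceeds the actual radius, burning is preserved. As $i+1\le k-1$, the simulation uses only weak levels $\le k-1$, leaving the top two weak levels (with radii $(k-1)\beta'+1$ and $k\beta'+1$), which together contain at least $\beta'$ fires, unused. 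Because $p\in o(g^*)$ and $\beta'=\Theta(g^*/k)$, for $g^*$ large enough all $p$ removed fires fall inside these unused top levels, yielding a valid schedule for $I'(g^*-1,k)$ minus its $p$ largest fires---a contradiction. Thus $\opt>(k-2)\beta'$, and combining with the upper bound gives $g^*/\opt\le k/(k-2)+o(1)\le 1+\epsilon$ for the chosen $k$ (using $g^*\ge b=\omega(1)$ to absorb lower-order terms).

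The running time is polynomial in $n$: there are $O(n)$ candidate deadlines, and each invocation of Lemma~\ref{lem:ptas} with constant $k$ runs in polynomial time. The main obstacle I anticipate is the lower-bound simulation---setting up the precise correspondence between actual fires and surviving weak fires, accounting for the possibly partial last weak group, and reconciling the various ``$+1$''s so that weak radii genuinely dominate the actual ones they replace. Once these details are pinned down the approximation estimate follows by direct algebra, and the feasibility of ordering activators in the produced actual schedule reduces to the standard observation that fires started at mutually far-apart centres do not burn each other prematurely.
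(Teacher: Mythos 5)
Your proposal is correct and follows essentially the same route as the paper: search for the smallest $g$ on which Lemma~\ref{lem:ptas} accepts the weak instance $I'(g,k)$, turn a `yes' into a length-$g$ schedule by letting the larger actual radii dominate the rounded-down weak radii, and turn the `no' on $g-1$ into a lower bound on \opt by rounding an allegedly short optimal schedule up by one radius group and noting the removed $o(g)$ largest fires lie in the untouched top group(s). The only difference is cosmetic: the paper routes the lower bound through the strong instance $I''(g-1,k)$ and takes $k=\lceil 1/\epsilon\rceil+1$, whereas you simulate directly and pay a $k/(k-2)$ factor compensated by choosing $k=\lceil 2/\epsilon\rceil+2$.
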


\begin{proof}
Let $k = \lceil 1/\epsilon \rceil +1$. We exhaustively apply Lemma \ref{lem:ptas} to find the smallest value of $g$ so that the algorithm of the lemma returns `yes' for the weak instance $I'(g,k)$ of the problem. Since the graph can be burned with such weak instance, it can be burned with the actual instance formed by fires of radii $(1,2, \ldots, g)$ (this only involves increasing the radii of fires in the solution provided by the weak instance). So, we can burn the graph in $g$ rounds. Next we provide a lower bound for \opt.

Since the algorithm returns `no' for the weak instance $I'(g-1,k)$, by Lemma \ref{lem:ptas}, it is not possible to burn the graph with fires in the weak-instance in which $p \in o(g)$ largest fires are removed for some value of $p$.
Recall that the weak instance $I'(g-1,k)$ is similar to the strong instance $I''(g-1,k)$ in which $\beta = \lfloor (g-1)/k \rfloor$ fires of largest radii are removed. We conclude that, the strong instance $I''(g-1,k)$ in which $\beta - o(g)$ fires with largest radii are removed cannot burn the graph. Meanwhile, such strong instance is similar to the regular instance formed by fires of radii $1,2, \ldots, g-1-\beta - o(g)$ in which some fires radii is increased. We conclude that it is not possible to burn the graph in $g-1-\beta -o(g) $ rounds. This implies $\opt \geq g(1-1/k) -o(g)$. The ratio between the cost of the algorithm and \opt approaches to $\frac{g}{g(1-1/k)} = 1 + 1/(k-1)$, which is at most $1+\epsilon$.
%
%
\end{proof}
}

\section*{Concluding Remarks}
For general graphs, we provided an approximation algorithm with constant factor of 3. This result shows the burning problem is different from problems such as the Firefighter problem that do not admit constant approximations. The approximation factor is likely to be improved. However, such improvement requires a different (and more involved) argument that improves the lower bounds of Lemma \ref{lem1} for the cost of \opt.

It is not clear whether the burning problem admits a PTAS or is APX-hard for general graphs. A potential APX-hardness proof requires an approach different from the current reductions which are confined to input graphs that are forests of paths. Recall that we showed there is a PTAS for these instances. As the \blue{existing negative results are confined to} sparse, disconnected graphs, and since a PTAS exists for disjoint forests of paths, it might be possible that a PTAS exists for general graphs. We note that the hardness results concerning similar problems such as $k$-center and dominating set problems cannot be applied to show APX-hardness of the burning problem.

\bibliographystyle{plain}
\bibliography{refs}

\end{document}